\newtheorem{theorem}{Theorem}[section]
\newtheorem{proposition}[theorem]{Proposition}
\newtheorem{corollary}[theorem]{Corollary}
\theoremstyle{definition}
\theoremstyle{remark}
\numberwithin{equation}{section}
\def\rr{{\mathbb R}}
\def\rn{{{\rr}^n}}
\def\cm{{\mathrm M}}
\def\fz{\infty}
\def\az{\alpha}
\def\dz{\delta}
\def\hs{\hspace{0.3cm}}
\def\ls{\lesssim}
\def\dlim{\displaystyle\lim}
\def\r{\right}
\def\lf{\left}
\begin{document}

\arraycolsep=1pt
\title[Regularity and Capacity for the Fractional Dissipative Operator]
{Regularity and Capacity for the Fractional Dissipative Operator}

%    author two information

\author{Renjin Jiang}
\address{School of Mathematical Sciences, Beijing Normal University, Laboratory of Mathematics
and Complex Systems, Ministry of Education, Beijing, 100875, China}
\email{rejiang@bnu.edu.cn}
\author{Jie Xiao}
%    Address of record for the research reported here
\address{Department of Mathematics and Statistics, Memorial University of Newfoundland, St. John's, NL A1C 5S7, Canada}
\email{jxiao@mun.ca}
\author{Dachun Yang}
\address{School of Mathematical Sciences, Beijing Normal University, Laboratory of Mathematics
and Complex Systems, Ministry of Education, Beijing, 100875, China}
\email{dcyang@bnu.edu.cn}
\author{Zhichun Zhai}
\address{Department of Mathematical and Statistical Sciences, University of Alberta, Edmondon, AB T6G 2G1, Canada}
%\curraddr{}
\email{zhichun1@ualberta.ca}

\thanks{Jie Xiao was in part supported by NSERC of Canada and URP of Memorial
University, Canada. Dachun Yang was supported by the National Natural Science Foundation (Grant No. 11171027) of
China and Program for Changjiang Scholars and Innovative Research Team in University of China.}

%    General info
\subjclass[2010]{Primary 35K05, 31C15}
\date{}

\keywords{fractional dissipative equation/operator; regularity; capacity; blow-up set; Hausdorff dimension}

\begin{abstract} This note is devoted to exploring some analytic-geometric properties of the
regularity and capacity associated to the so-called fractional dissipative
operator $\partial_t+(-\Delta)^\alpha$, naturally establishing a diagonally sharp Hausdorff dimension estimate for the blow-up set of a weak solution to the fractional dissipative equation $(\partial_t+(-\Delta)^\alpha)u(t,x)=F(t,x)$ subject to $u(0,x)=0$.
\end{abstract}

\maketitle

\tableofcontents \pagenumbering{arabic}
%\addcontentsline{toc}{chapter}{}
%\section{List of symbols}
%\addcontentsline{toc}{chapter}{List of symbols}
%\section{Introduction} \addcontentsline{toc}{chapter}{Introduction}

% \vspace{0.1in}

\section{Introduction and the main results}\label{s1}

This beginning part is designed to describe the principal results of this article.

\subsection{The fractional dissipative equation} For $n=1,2,3,...$, $\alpha\in (0,1]$
and $\mathbb R_+:=(0,\infty)$, let $\mathbb{R}^{1+n}_{+}:=\mathbb R_+\times\mathbb{R}^{n}$
be the upper half space of the $1+n$ dimensional Euclidean space $\mathbb R^{1+n}$
and $(-\Delta)^{\alpha}$ be the fractional Laplace operator which is determined by
$$
(-\Delta)^{\alpha}u(\cdot,x):
   =\mathcal {F}^{-1}(|\xi|^{2\alpha}\mathcal{F}u(\cdot,\xi))(x),\ \forall x\in\rn,
$$
where  $\mathcal {F}$ denotes  the Fourier transform and  $\mathcal {F}^{-1}$ its
inverse:
$$
    \begin{cases}
    \mathcal{F}(g)(x):=(2\pi)^{-n/2}\int_{\mathbb R^n}e^{-ix\cdot y}g(y)\,dy;\\
     \mathcal{F}^{-1}(g)(x):=(2\pi)^{-n/2}\int_{\mathbb R^n}e^{ix\cdot y}g(y)\,dy.
    \end{cases}
$$
     From the celebrated Duhamel's principle it follows that a weak solution $u(t,x)$
     to the fractional dissipative equation living in fluid dynamics via the so-called fractional dissipative operator
     $L^{(\alpha)}:=\partial_{t}+(-\Delta)^{\alpha}$:
$$
\begin{cases}\label{1a}
L^{(\alpha)}u(t,x)=F(t,x),\quad\forall (t,x)\in\mathbb{R}^{1+n}_{+};\\
u(0,x)=f(x),\quad\forall x\in\mathbb{R}^n,
\end{cases}
$$
namely (cf. \cite{NSS, FJR}),
$$
\begin{cases}
\iint_{\mathbb R^{1+n}_+} u\tilde{L}^{(\alpha)}\phi\,dxdt\!=\!-\iint_{\mathbb R^{1+n}_+}
F\phi\,dxdt-\!\int_{\mathbb R^n}f(x)\phi(0,x)\,dx,\, \forall \phi\in C_0^\infty(\mathbb R^{1+n}_+)\\
\hbox{with}\\
\tilde{L}^{(\alpha)}\phi(t,x)=-\partial_t\phi(t,x)+\Big(\frac{(1-\alpha)2^{2\alpha}
\Gamma\big(\frac{n+2\alpha}{2}\big)}{\pi^{n/2}\Gamma(1-\alpha)}\Big)
{\dlim_{\epsilon\to 0}}\int_{\{y\in\mathbb R^n: |y|>\epsilon\}}\frac{\phi(t,x+y)-\phi(t,x)}{|y|^{-n-2\alpha}}\,dy,
\end{cases}
$$
can be written as
$$
u(t,x)=R_\alpha f(t,x)+S_\alpha F(t,x),
$$
where
$$
\begin{cases}
R_\alpha f(t,x):=e^{-t(-\Delta)^\alpha}f(x);\\
S_\alpha F(t,x):=\int_{0}^{t}e^{-(t-s)(-\Delta)^{\alpha}}F(s,x)\,ds,
\end{cases}
$$
for which
$$
\begin{cases}
e^{-t(-\Delta)^{\alpha}}v(\cdot,x):=K_{t}^{(\alpha)}(x)\ast v(\cdot,x);\\
   K_{t}^{(\alpha)}(x):=(2\pi)^{-n/2}\int_{\mathbb R^n}e^{ix\cdot y-t|y|^{2\alpha}}\,dy,
   \end{cases}
$$
   and $\ast$ represents the convolution operating on the space variable. Here it is perhaps appropriate to mention that
   $$
   \begin{cases}
   K_{t}^{(1)}(x)=(4\pi)^{-n/2}e^{-|x|^2/(4t)}\\
   \hbox{and}\\
   K_t^{(1/2)}(x)=\pi^{-(1+n)/2}\Gamma\big((n+1)/2\big)t(t^2+|x|^2)^{-(1+n)/2}
   \end{cases}
   $$
   are the heat and Poisson kernels, respectively. Of course, $\Gamma(\cdot)$ is the classical gamma function. Although an explicit formula of $K_t^{(\alpha)}(x)$ for $\alpha\in (0,1]\setminus\{1/2,1\}$ is unknown (cf. \cite{MiaoYuanZhang, CW, Wu2, Wu Yuan, Chen4} and \cite{Zhai1, NSS, NSY1, NSY2, NSY3} for some related information), one has the following basic estimate (cf. \cite{XieZhang, ChenSong}): under $\alpha\in (0,1)$
   $$
    K_t^{(\alpha)}(x)\approx t\big(t^{\frac1{2\alpha}}+|x|\big)^{-(n+2\alpha)},\quad\forall (t,x)\in \mathbb R_+^{1+n}.
    $$
In the above and below,  $\mathsf{X}\approx\mathsf{Y}$ means $\mathsf{Y}\lesssim\mathsf{X}\lesssim\mathsf{Y}$ where the second estimate means that there is a positive constant $c$, independent of main parameters,
such that $\mathsf{X}\le c\mathsf{Y}$. From now on, $\alpha$ will be always assumed to be in the interval $(0,1)$.

\subsection{Regularity for the fractional dissipative operator} The following function space regularity results of Strichartz type, plus \cite{Ad}, actually induce the research objective of this current paper.

\begin{theorem}\label{t11}
\item{\rm(i)} \cite[Lemma 3.2]{MiaoYuanZhang} If
$$
\begin{cases}
1\le p\le \tilde{p}<\frac{np}{n-\min\{n,2\alpha\}};\\
\frac{1}{\tilde{q}}=\big(\frac{n}{2\alpha}\big)\big(\frac{1}{p}-\frac{1}{\tilde{p}}\big),
\end{cases}
$$
then
$$
\|R_\alpha f\|_{L_t^{\tilde{q}}L_x^{\tilde{p}}(\mathbb R^{1+n}_+)}\lesssim\|f\|_{L^p(\mathbb R^n)}.
$$
\item{\rm(ii)} \cite[Theorem 1.4]{Zhai} If
$$
\begin{cases}
1\le p<\tilde{p}\le\infty;\\
1<q<\tilde{q}<\infty;\\
\big(\frac{1}{q}-\frac{1}{\tilde{q}}\big)+\big(\frac{n}{2\alpha}\big)\big(\frac{1}{p}-\frac{1}{\tilde{p}}\big)=1,
\end{cases}
$$
then
$$
\|S_\alpha F\|_{L_t^{\tilde{q}}L_x^{\tilde{p}}(\mathbb R^{1+n}_+)}\lesssim\|F\|_{L_t^{q}L_x^{p}(\mathbb R^{1+n}_+)}.
$$
\end{theorem}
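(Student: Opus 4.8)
The plan is to deduce both estimates from the single two-sided kernel bound $K_\tau^{(\alpha)}(x)\approx\tau(\tau^{1/(2\alpha)}+|x|)^{-(n+2\alpha)}$ recorded above, reducing each to a one-dimensional (in the time variable) integral inequality. First I would record the spatial $L^r$-size of the kernel: by the pointwise bound and the change of variables $x=\tau^{1/(2\alpha)}z$,
\[
\big\|K_\tau^{(\alpha)}\big\|_{L^r(\mathbb R^n)}\approx\tau^{\frac{n}{2\alpha}\left(\frac1r-1\right)},\qquad \tau>0,\ 1\le r\le\infty,
\]
the residual integral $\int_{\mathbb R^n}(1+|z|)^{-(n+2\alpha)r}\,dz$ being finite for every $r\ge1$. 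Combined with Young's convolution inequality $\|g\ast h\|_{L^{\tilde p}}\le\|g\|_{L^r}\|h\|_{L^p}$, with $\frac1r=1+\frac1{\tilde p}-\frac1p$, this yields the fixed-time decay estimate
\[
\big\|e^{-\tau(-\Delta)^\alpha}h\big\|_{L^{\tilde p}(\mathbb R^n)}\lesssim\tau^{-\sigma}\|h\|_{L^p(\mathbb R^n)},\qquad \sigma:=\tfrac{n}{2\alpha}\Big(\tfrac1p-\tfrac1{\tilde p}\Big),
\]
valid whenever $p\le\tilde p$ (so that $r\ge1$). The exponent hypotheses in (i) and (ii) are precisely what place $\sigma$ in the admissible range for the one-dimensional step that follows.

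For part (ii), I would write $S_\alpha F(t,\cdot)=\int_0^t e^{-(t-s)(-\Delta)^\alpha}F(s,\cdot)\,ds$, apply Minkowski's integral inequality in $x$, and then invoke the fixed-time decay estimate to get
\[
\|S_\alpha F(t,\cdot)\|_{L_x^{\tilde p}}\lesssim\int_0^t (t-s)^{-\sigma}\,g(s)\,ds,\qquad g(s):=\|F(s,\cdot)\|_{L_x^p}.
\]
Here $p<\tilde p$ forces $\sigma>0$, while the scaling identity $(\frac1q-\frac1{\tilde q})+\sigma=1$ together with $q<\tilde q$ gives $\sigma<1$. The right-hand side is a one-sided Riemann--Liouville fractional integral of order $1-\sigma$, and the one-dimensional Hardy--Littlewood--Sobolev inequality maps $L^q(\mathbb R_+)\to L^{\tilde q}(\mathbb R_+)$ exactly when $\frac1{\tilde q}=\frac1q-(1-\sigma)$, $0<\sigma<1$ and $1<q<\tilde q<\infty$ --- all guaranteed by the hypotheses. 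Taking the $L^{\tilde q}_t$ norm and recalling $\|g\|_{L^q}=\|F\|_{L_t^qL_x^p}$ completes (ii).

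Part (i) is the genuinely delicate case, and I expect it to be the main obstacle. The same scheme gives $\|R_\alpha f(t,\cdot)\|_{L_x^{\tilde p}}\lesssim t^{-1/\tilde q}\|f\|_{L^p}$, with decay exponent exactly $1/\tilde q$ by the identity $\frac1{\tilde q}=\frac{n}{2\alpha}(\frac1p-\frac1{\tilde p})$. But now there is no convolution in time, and taking the $L^{\tilde q}_t$ norm of $t^{-1/\tilde q}$ produces the logarithmically divergent $\int_0^\infty t^{-1}\,dt$; the naive argument fails precisely at this scaling-critical exponent. The fix is to read the decay bound as a \emph{weak-type} statement: since $t\mapsto t^{-1/\tilde q}$ lies in the Lorentz space $L^{\tilde q,\infty}(\mathbb R_+)$, one obtains $\|R_\alpha f\|_{L_t^{\tilde q,\infty}L_x^{\tilde p}}\lesssim\|f\|_{L^p}$. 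I would then fix $\tilde p$, choose two admissible pairs $(p_0,\tilde q_0)$ and $(p_1,\tilde q_1)$ straddling $(p,\tilde q)$ along the line $\frac1{\tilde q}=\frac{n}{2\alpha}(\frac1p-\frac1{\tilde p})$ (possible because the defining inequalities are open), and upgrade the two weak bounds to the strong bound $\|R_\alpha f\|_{L_t^{\tilde q}L_x^{\tilde p}}\lesssim\|f\|_{L^p}$ by real (Marcinkiewicz-type, vector-valued in the fixed index $\tilde p$) interpolation of the linear map $f\mapsto R_\alpha f$. The role of the strict bound $\tilde p<\frac{np}{n-\min\{n,2\alpha\}}$ is exactly to force $\tilde q>p$, i.e.\ the condition $p\le\tilde q$ that the Marcinkiewicz theorem requires at the interpolated endpoint; this is the room the argument needs, and it is what distinguishes the subtle homogeneous estimate (i) from the soft convolution-driven estimate (ii).
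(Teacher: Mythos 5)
There is no in-paper argument to compare against: the paper does not prove Theorem \ref{t11} at all, but imports (i) from \cite[Lemma 3.2]{MiaoYuanZhang} and (ii) from \cite[Theorem 1.4]{Zhai}, so your proposal must be judged on its own merits. Your proof of (ii) is correct and is essentially the standard (and the cited) argument: the two-sided kernel bound gives $\|K_\tau^{(\alpha)}\|_{L^r(\mathbb R^n)}\approx\tau^{\frac{n}{2\alpha}(\frac1r-1)}$, Young plus Minkowski reduce the problem to convolution on $\mathbb R_+$ with $|t-s|^{-\sigma}$, and the hypotheses say precisely that $0<\sigma<1$, $\frac1{\tilde q}=\frac1q-(1-\sigma)$ and $1<q<\tilde q<\infty$, so one-dimensional Hardy--Littlewood--Sobolev closes the estimate.

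Part (i) is where there is a genuine gap. Your mechanism --- weak-type bound $L^p\to L^{\tilde q,\infty}_tL^{\tilde p}_x$ from the $t^{-1/\tilde q}$ decay, then real (Marcinkiewicz-type) interpolation along the scaling line with $\tilde p$ fixed, using that the constraint $\tilde p<\frac{np}{n-\min\{n,2\alpha\}}$ is equivalent to $\tilde q>p$ so that $L^{\tilde q,p}_t\hookrightarrow L^{\tilde q}_t$ --- is the right one, but only for $1<p<\tilde p$. Your claim that straddling pairs exist ``because the defining inequalities are open'' fails exactly at the endpoints the hypotheses allow. At $p=\tilde p$ (so $\tilde q=\infty$) there is nothing to straddle with; this case is harmless, being just the uniform $L^p$-boundedness of the semigroup, but it needs to be said. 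The serious failure is at $p=1$, which the statement permits: you cannot choose $p_0<1$, and this is not a removable technicality, because the strong-type estimate is in fact \emph{false} there. Indeed, take $f\ge0$ a fixed bump with $\int_{\mathbb R^n}f=1$, supported in $B(0,1)$, and let $\tilde p>1$. For $t\ge1$ and $|y|\le1$ one has $t^{1/(2\alpha)}+|x-y|\le 2\big(t^{1/(2\alpha)}+|x|\big)$, so the kernel bound gives $R_\alpha f(t,x)\gtrsim t\big(t^{1/(2\alpha)}+|x|\big)^{-(n+2\alpha)}$, whence
\[
\|R_\alpha f(t,\cdot)\|_{L^{\tilde p}_x}\gtrsim\Big(\int_{\mathbb R^n}\frac{t^{\tilde p}\,dx}{\big(t^{1/(2\alpha)}+|x|\big)^{(n+2\alpha)\tilde p}}\Big)^{1/\tilde p}\approx t^{-\frac{n}{2\alpha}\left(1-\frac1{\tilde p}\right)}=t^{-1/\tilde q},\qquad t\ge 1,
\]
and therefore $\int_1^\infty\|R_\alpha f(t,\cdot)\|_{L^{\tilde p}_x}^{\tilde q}\,dt\gtrsim\int_1^\infty t^{-1}\,dt=\infty$ while $\|f\|_{L^1}=1$. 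So at $p=1$ only the weak-type estimate survives; no interpolation trick can recover the strong bound, and the statement as transcribed in the paper is itself over-inclusive at this endpoint (the cited lemma must be read with $p>1$ when $p<\tilde p$). In sum: your proposal proves (ii) in full, and proves (i) exactly on the range $1<p<\tilde p$ together with the trivial diagonal $p=\tilde p$ --- which is the most that is true --- but the blanket assertion that the hypotheses leave room to straddle is wrong as written.
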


Here and henceforth: $L^p_x(\mathbb R^n)$ denotes the usual Lebesgue $1\le p\le\infty$-space with respect to the space variable $x$; $L_{t}^{p_{2}}L_{x}^{p_{1}}(\mathbb{R}^{1+n}_+)$ is the mixed $(1\le p_1,p_2<\infty)$-Lebesgue space of all functions $F$ on $\mathbb R^{1+n}_+$ with
$$
\|F\|_{L_{t}^{p_{2}}L_{x}^{p_{1}}(\mathbb{R}^{1+n}_+)}
:=\left(\int_{\mathbb R_+}\left[\int_{\mathbb{R}^{n}}|F(t,x)|^{p_{1}}dx\right]^{\frac{p_{2}}{p_{1}}}
\,dt\right)^{\frac{1}{p_{2}}}<\infty,
$$
where a suitable modification is needed whenever $p_1$ or $p_2$ is $\infty$; for $\mathbb X=\mathbb R^n$ or $\mathbb R^{1+n}_+$ the symbols $C^\infty(\mathbb X)$, $C^\infty_0(\mathbb X)$ and $C(\mathbb X)$ stand for all infinitely smooth functions in $\mathbb X$, all infinitely smooth functions with compact support in $\mathbb X$ and all continuous functions in $\mathbb X$, respectively.

Throughout the paper, for each $(t_0,x_0)\in \rr^{1+n}_+$ and $r>0$, the parabolic ball is defined as
$$B^{(\alpha)}_{r}(t_{0},x_{0}):=\{(t,x)\in\mathbb{R}^{1+n}_{+}:\
|t-t_{0}|<r^{2\alpha}\ \&\ |x-x_0|<r\}
$$
and its volume is denoted by $|B_{r_0}^{(\alpha)}{(t_0,x_0)}|\approx r_0^{n+2\alpha}$.

The first main result of this paper appears as an essential extension or complement of Theorem \ref{t11}.

\begin{theorem}\label{t1new}

\item{\rm (i)} If $p\in[1,\infty]$ and $f\in L^p(\rr^n)$, then $R_\alpha f$ is continuous on $\rr^{1+n}_+$.

\item{\rm (ii)} If
$$
\begin{cases}
p\in[1,\infty);\\
1<q<\infty; \\
\frac{n}{p}+\frac{2\alpha}{q}=2\alpha;\\
(t_0,x_0)\in \rr^{1+n}_+;\\
r_0=t_0^{\frac {1}{2\az}};\\
0<\|F\|_{L^q_tL^p_x(\rr^{1+n}_+)}<\infty,
\end{cases}
$$
then there exists $C>0$ such that
$$\frac{1}{|B_{r_0}^{(\alpha)}{(t_0,x_0)}|}\iint_{B_{r_0}^{(\alpha)}{(t_0,x_0)}}\exp\lf(\frac{S_\az F(t,x)}
   {C\|F\|_{L^q_tL^p_x(\rr^{1+n}_+)}}\r)^{\frac{q}{q-1}}\,dx\,dt\ls 1.
$$

\item{\rm(iii)} If
$$
\begin{cases}
p\in[1,\infty);\\
1<q<\infty; \\
\frac{n}{p}+\frac{2\alpha}{q}<2\alpha;\\
(t,x)\in \rr^{1+n}_+;\\
\|F\|_{L^q_tL^p_x(\rr^{1+n}_+)}<\infty,
\end{cases}
$$
then $S_\alpha F$ is H\"older continuous in the sense that
\begin{eqnarray*}
   |S_\az F(t,x)-S_\az F(t_0,x_0)|&&\lesssim\big(|t-t_0|^{\frac{
   2\alpha-\frac{n}{p}-\frac{2\alpha}{q}}{2\alpha}}+|x-x_0|^{2\alpha-\frac{n}{p}-\frac{2\alpha}{q}}\big)
   \|F\|_{L^q_tL^p_x(\rr^{1+n}_+)}
\end{eqnarray*}
holds for any two sufficient close points $(t_0,x_0), (t,x)\in\rr^{1+n}_+$
\end{theorem}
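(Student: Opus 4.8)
The plan is to run all three parts off the sharp kernel bound $K_t^{(\az)}(x)\approx t(t^{1/(2\az)}+|x|)^{-(n+2\az)}$ together with its exact parabolic scaling $K_t^{(\az)}(x)=t^{-n/(2\az)}K_1^{(\az)}(t^{-1/(2\az)}x)$, which comes from the substitution $y\mapsto t^{-1/(2\az)}y$ in the defining Fourier integral. The one computation I would record first is that $K_1^{(\az)}\in L^{p'}(\rn)$ for every $p\in[1,\fz]$ (since $(n+2\az)p'>n$ always), whence by scaling $\|K_\tau^{(\az)}\|_{L^{p'}(\rn)}\approx\tau^{-n/(2\az p)}$. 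For (i) I would then fix $(t_0,x_0)$ and bound, by H\"older, $|R_\az f(t,x)-R_\az f(t_0,x_0)|\le\|K_t^{(\az)}(x-\cdot)-K_{t_0}^{(\az)}(x_0-\cdot)\|_{L^{p'}(\rn)}\,\|f\|_{L^p(\rn)}$, reducing continuity to the joint $L^{p'}$-continuity of $(t,x)\mapsto K_t^{(\az)}(x-\cdot)$ away from $t=0$. This I would get from dominated convergence: the integrand converges pointwise by smoothness of $K_1^{(\az)}$ and the scaling relation, and on a compact neighborhood with $t$ bounded away from $0$ and $\fz$ the uniform kernel bound furnishes a single $L^{p'}$ dominating function (the case $p=\fz$ uses $p'=1$ and $\|K_\tau^{(\az)}\|_{L^1}=1$).

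The heart of the matter is (ii), and the key is to eliminate the space variable first. Applying H\"older in $y$ to $S_\az F(t,x)=\int_0^t\!\int_{\rn}K_{t-s}^{(\az)}(x-y)F(s,y)\,dy\,ds$ and inserting $\|K_\tau^{(\az)}\|_{L^{p'}}\approx\tau^{-n/(2\az p)}$ gives the $x$-independent pointwise bound $|S_\az F(t,x)|\ls\int_0^t(t-s)^{-n/(2\az p)}h(s)\,ds$, where $h(s):=\|F(s,\cdot)\|_{L^p(\rn)}$ obeys $\|h\|_{L^q(\rr_+)}=\|F\|_{L^q_tL^p_x(\rr^{1+n}_+)}$. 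In the critical regime $\tfrac np+\tfrac{2\az}q=2\az$ one has $n/(2\az p)=1/q'=1-1/q$, so the right-hand side is a constant multiple of the one-dimensional Riemann--Liouville fractional integral $I^{1/q}h$ of order $1/q$, i.e. precisely the borderline $\beta q=1$ case in dimension one. Because the bound is independent of $x$, the $x$-integration over $B_{r_0}^{(\az)}(t_0,x_0)$ supplies only the volume $r_0^{n}$; since $|B_{r_0}^{(\az)}(t_0,x_0)|\approx r_0^{n+2\az}$ and $r_0^{2\az}=t_0$, the parabolic average collapses to the time average $\tfrac1{|J|}\int_J(\cdots)\,dt$ over the interval $J$ of length $\approx t_0$ about $t_0$. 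The claim then reduces to the classical one-dimensional exponential-integrability estimate $\tfrac1{|J|}\int_J\exp\big((I^{1/q}h/C\|h\|_{L^q})^{q/(q-1)}\big)\,dt\ls1$, which I would obtain either from Adams' potential framework in \cite{Ad} — the effective kernel $(t-s)^{-1/q'}\mathbf 1_{\{s<t\}}$ lies in weak-$L^{q'}$ uniformly in $t$, exactly the hypothesis forcing the sharp exponent $q'=q/(q-1)$ — or equivalently by expanding the exponential and using the moment bound $\|I^{1/q}h\|_{L^{m}(J)}\ls m^{1/q'}\|h\|_{L^q(\rr_+)}$, the signature of the class $\exp L^{q'}$. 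The main obstacle is precisely this endpoint: direct H\"older in $s$ diverges logarithmically, which is why only exponential (not $L^\fz$) integrability survives and why the exponent $q/(q-1)$ is forced.

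For (iii) the strict inequality $\gz:=2\az-\tfrac np-\tfrac{2\az}q>0$ gives $nq'/(2\az p)<1$, so $\int_0^t(t-s)^{-n/(2\az p)}h(s)\,ds$ now converges and the same reduction yields a finite $L^\fz$ bound; I would upgrade this to H\"older continuity by treating spatial and temporal increments separately. For the spatial increment I would write $S_\az F(t,x)-S_\az F(t,x_0)=\int_0^t\!\int_{\rn}[K_{t-s}^{(\az)}(x-y)-K_{t-s}^{(\az)}(x_0-y)]F(s,y)\,dy\,ds$, estimate the $L^{p'}$-norm of the kernel difference by interpolating the scaled gradient bound $|\nz_xK_\tau^{(\az)}(x)|\ls\tau(\tau^{1/(2\az)}+|x|)^{-(n+2\az+1)}$ against the plain kernel bound, and integrate in $s$ by H\"older; tracking the powers of $\tau$ and of $|x-x_0|$ produces exactly the exponent $\gz$. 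For the temporal increment with $t>t_0$ I would split $\int_0^t-\int_0^{t_0}$ into the short piece $\int_{t_0}^t K_{t-s}^{(\az)}\ast F\,ds$ and the kernel-difference piece $\int_0^{t_0}[K_{t-s}^{(\az)}-K_{t_0-s}^{(\az)}]\ast F\,ds$, the latter controlled through $|\pa_\tau K_\tau^{(\az)}|$; the parabolic scaling $\tau\sim|x|^{2\az}$ then converts the spatial exponent $\gz$ into the temporal exponent $\gz/(2\az)$. The only delicate point here is the bookkeeping of these exponents and the moving integration limit in the time increment, but both are routine once the kernel derivative bounds are in hand.
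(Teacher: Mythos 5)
Your proposal is correct and, in its two substantive parts, follows the same architecture as the paper's proof: in (ii) both arguments apply H\"older in the space variable to collapse $S_\az F(t,x)$ to the $x$-independent one-dimensional potential $\int_0^t(t-s)^{-n/(2\az p)}h(s)\,ds$ with $h(s)=\|F(s,\cdot)\|_{L^p_x(\rn)}$, so that the parabolic-ball average reduces to a time average, and then prove borderline exponential integrability; in (iii) both use the same near/far spatial splitting (plain kernel bound near, mean-value/gradient bound far, with the time integral cut at $\approx\dz^{2\az}$) and the same temporal splitting $\int_0^{t_0}[K^{(\az)}_{t-s}-K^{(\az)}_{t_0-s}]+\int_{t_0}^{t}$ controlled by the generator kernel $(-\Delta)^\az e^{-\tau(-\Delta)^\az}$. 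Where you genuinely diverge is in how the endpoint step of (ii) and part (i) are finished. For (ii), the paper runs an explicit Hedberg-type argument: split the time integral at a radius $r$, get $(\ln\frac{t}{t-r})^{1/q'}$ by H\"older on one piece and $|t-r|^{1/q}\cm_\rr h(t)$ by dyadic decomposition on the other, optimize in $r$, and integrate using the $L^q$-boundedness of the Hardy--Littlewood maximal operator; you instead identify the reduced object as the 1D Riemann--Liouville potential $I^{1/q}h$ at the critical index $\beta q=1$ and quote (or re-prove via moments) the classical exponential-integrability estimate. Both are valid; the paper's self-contained route has the side benefit that its intermediate pointwise bound in terms of $\cm_\rr h$ is reused verbatim in the proof of Theorem \ref{t12new}(ii), while your reduction makes the structure (1D Trudinger at the borderline) transparent. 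Two small repairs to your sketch: the moment bound must carry the domain factor, $\|I^{1/q}h\|_{L^m(J)}\ls m^{1/q'}|J|^{1/m}\|h\|_{L^q(\rr_+)}$, which is harmless after dividing by $|J|$; and in (i), where the paper instead integrates the bound $\|(-\Delta)^\az e^{-t(-\Delta)^\az}f\|_{L^\infty(\rn)}\ls t^{-1-n/(2\az p)}\|f\|_{L^p(\rn)}$ in time, your H\"older-plus-dominated-convergence argument works as stated for $p\in(1,\infty]$, but for $p=1$ (so $p'=\infty$) dominated convergence does not yield $L^\infty$-convergence of the kernels; there you need the uniform continuity and uniform decay of $K^{(\az)}_1$ together with the scaling formula --- a one-line fix, but not literally the argument you wrote.
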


\subsection{Capacity for the fractional dissipative operator} From Theorems \ref{t11}-\ref{t1new}
we know that it is necessary to estimate the size of the blow-up set of the so-called fractional dissipative potential $S_\alpha F$ below:
$$
\mathcal{B}[S_\alpha F;p,q]:=\{(t,x)\in \mathbb{R}^{1+n}_{+}: S_\alpha F(t,x)=\infty\}\quad \hbox{for}\quad 0\le F\in L_{t}^{q}L_{x}^{p}(\mathbb{R}^{1+n}_+).
$$

To handle this issue, let us introduce a new type of capacity. For a compact subset $K$ of $\mathbb{R}^{1+n}_{+},$ let
$$
C_{p,q}^{(\alpha)}(K)
:=\inf\Big\{\|F\|_{L^{q}_{t}L_{x}^{p}(\mathbb{R}^{1+n}_{+})}^{p\wedge
q}:\quad F\ge 0\ \&\ S_\alpha F\geq 1_{K}\Big\}
$$
be the $(\alpha,p,q)$-capacity of $K$ for the fractional dissipative
operator $L^{(\alpha)}$, where $1_{K}$ is the characteristic function of $K,$  $p\wedge q:=\min\{p,q\}$,
and $1\leq p,q<\infty.$ Moreover, the definition of $C_{p,q}^{(\alpha)}$ extends
to any subset of $\mathbb{R}^{1+n}_{+}$ in a similar way as \cite[Definitions 2.2.2 \& 2.2.4]{AH}.

Next, for
$$
\begin{cases}
0<\varepsilon\le\infty;\\
0<d<\infty;\\
K\subset\mathbb R^{1+n}_+;\\
B_{r_j}^{(\alpha)}(t_j,x_j):=\big\{(s,y)\in \mathbb{R}^{1+n}_{+}:\ |s-t_j|<r_j^{2\alpha}\ \&\ |y-x_j|<r_j\big\};\\
(t_j,x_j,r_j)\in\mathbb R_+\times\mathbb R^n\times\mathbb R_+;\\
\phi: [0,\infty)\mapsto [0,\infty]\ - \hbox{an\ increasing\ function\ with}\ \phi(0)=0,
\end{cases}
$$
let
$$
H_{\varepsilon}^{\phi,\alpha}(K):=\inf\left\{\sum_{j=1}^\infty \phi(r_{j}):\
K\subseteq\bigcup_{j=1}^\infty B^{(\alpha)}_{r_{j}}(t_{j},x_{j});\quad\hbox{with}\quad r_{j}\in (0,\varepsilon)\right\}
$$
be the $L^{\alpha}$-based $(\phi,\varepsilon)$-Hausdorff capacity of $K$. Then the $L^{\alpha}$-based $\phi$-Hausdorff measure of $K$ is defined by
$$
H^{\phi,\alpha}(K):=\lim_{\varepsilon\to
0}H^{\phi,\alpha}_{\varepsilon}(K).
$$
If $\phi(r):=r^d$ for all $r\in (0,\fz)$, then
$$
\begin{cases}
H_{\varepsilon}^{\phi,\alpha}(K)\equiv H_{\varepsilon}^{d,\alpha}(K);\\
H^{\phi,\alpha}(K)\equiv H^{d,\alpha}(K);\\
\hbox{dim}^{(\alpha)}_H(K):=\inf\{d: H^{d,\alpha}(K)=0\},
\end{cases}
$$
where the last quantity is called the $L^{(\alpha)}$-based Hausdorff dimension of $K$.

Below is our second theorem.

\begin{theorem}\label{t12new}

\item{\rm(i)} If
$$
\begin{cases}
1\le p<\infty;\\
1<q<\infty;\\
\frac{n}{p}+\frac{2\alpha}{q}-2\alpha>0,
\end{cases}
$$
then
$$
C_{p,q}^{(\alpha)}\big(B^{(\alpha)}_{r_0}(t_0,x_0)\big)\approx r_0^{(p\wedge q)\big(\frac{n}{p}+\frac{2\alpha}{q}-2\alpha\big)}\quad\hbox{as}\quad r_0\to 0\ \ \&\ \ (r_0,x_0)\in\mathbb R^{1+n}_+.
$$

\item{\rm(ii)} If
$$
\begin{cases}
1\le p<\infty;\\
1<q<\infty;\\
\frac{n}{p}+\frac{2\alpha}{q}-2\alpha=0,
\end{cases}
$$
then
$$
{C}_{p,q}^{(\alpha)}\big(B_r^{(\alpha)}(t_0,x_0)\big)\approx
\lf(\ln \frac 1{r_0}\r)^{(p\wedge q)(\frac{1}{q}-1)}\quad\hbox{as}\quad r_0\to 0\ \ \&\ \ (r_0,x_0)\in\mathbb R^{1+n}_+.
$$
\end{theorem}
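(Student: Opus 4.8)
The plan is to reduce both parts to a sharp two-sided estimate of the single quantity
$$
\mathrm{cap}(B):=\inf\Big\{\|F\|_{L^q_tL^p_x(\rr^{1+n}_+)}:\ F\ge0,\ S_\alpha F\ge 1_{B}\Big\},\qquad B:=B^{(\alpha)}_{r_0}(t_0,x_0),
$$
because $t\mapsto t^{p\wedge q}$ is increasing, so $C_{p,q}^{(\alpha)}(B)=\big(\mathrm{cap}(B)\big)^{p\wedge q}$ and the exponent $p\wedge q$ merely factors out at the end. Writing $\gamma:=\frac np+\frac{2\alpha}q-2\alpha$ and using the representation and kernel bound
$$
S_\alpha F(t,x)=\int_0^t\!\!\int_{\rr^n}K^{(\alpha)}_{t-s}(x-y)F(s,y)\,dy\,ds,\qquad K^{(\alpha)}_{\tau}(z)\approx \tau\big(\tau^{\frac1{2\alpha}}+|z|\big)^{-(n+2\alpha)},
$$
the parabolic dilation $\delta_\lambda(t,x)=(\lambda^{2\alpha}t,\lambda x)$ satisfies $S_\alpha(F\circ\delta_{1/\lambda})=\lambda^{2\alpha}(S_\alpha F)\circ\delta_{1/\lambda}$ and $\|F\circ\delta_{1/\lambda}\|_{L^q_tL^p_x}=\lambda^{\frac np+\frac{2\alpha}q}\|F\|_{L^q_tL^p_x}$, hence $\mathrm{cap}(\delta_\lambda B)=\lambda^{\gamma}\mathrm{cap}(B)$; this predicts the exponent $\gamma$ and shows why $\gamma=0$ is critical. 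I would nonetheless carry out the two bounds directly, the kernel bound being the only analytic input.

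For the upper bound in (i) ($\gamma>0$) I would test with $F=c\,1_{B'}$, where $B'=B^{(\alpha)}_{r_0}(t_0-2r_0^{2\alpha},x_0)$ is the parabolic ball immediately preceding $B$ in time. For $(t,x)\in B$ and $(s,y)\in B'$ one has $t-s\approx r_0^{2\alpha}$ and $|x-y|\lesssim r_0$, so the lower kernel bound gives $K^{(\alpha)}_{t-s}(x-y)\gtrsim r_0^{-n}$ and hence $S_\alpha F(t,x)\gtrsim c\,r_0^{-n}|B'|\approx c\,r_0^{2\alpha}$; choosing $c\approx r_0^{-2\alpha}$ makes $S_\alpha F\ge 1$ on $B$, while $\|F\|_{L^q_tL^p_x}\approx c\,r_0^{\frac np+\frac{2\alpha}q}\approx r_0^{\gamma}$. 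In the critical case (ii) a single ball only yields $\mathrm{cap}(B)\lesssim1$, so I would instead superpose $M\approx\ln\frac1{r_0}$ scales: take $F=\sum_{k=1}^{M}c_k 1_{B_k}$, where $B_k$ is a parabolic ball of radius $\approx 2^kr_0$ placed a parabolic time-lag $\approx 2^kr_0$ before $B$, with geometrically separated lags chosen so the time-supports are pairwise (essentially) disjoint and $2^Mr_0\lesssim t_0^{1/(2\alpha)}$. Each $B_k$ contributes $\approx c_k(2^kr_0)^{2\alpha}$ to $S_\alpha F$ on $B$; equidistributing by $c_k\approx (2^kr_0)^{-2\alpha}/M$ forces $S_\alpha F\gtrsim1$ on $B$, each summand has $L^q_tL^p_x$-norm $\approx 1/M$ (using $\frac np+\frac{2\alpha}q=2\alpha$), and disjointness in $t$ gives $\|F\|^q_{L^q_tL^p_x}\approx\sum_{k=1}^M M^{-q}=M^{1-q}$, i.e. $\mathrm{cap}(B)\lesssim(\ln\frac1{r_0})^{\frac1q-1}$.

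For the matching lower bounds I would use duality against Lebesgue measure. If $F\ge0$ and $S_\alpha F\ge 1_B$, then integrating over $B$, applying Fubini to the nonnegative kernel, and using the mixed-norm H\"older inequality gives
$$
|B|=\int_B 1\,dx\,dt\le \int_B S_\alpha F\,dx\,dt=\iint_{\rr^{1+n}_+}F\,S_\alpha^{\ast} 1_B\,dy\,ds\le \|F\|_{L^q_tL^p_x}\,\big\|S_\alpha^{\ast} 1_B\big\|_{L^{q'}_tL^{p'}_x},
$$
where $S_\alpha^{\ast} 1_B(s,y)=\iint_{B,\,t>s}K^{(\alpha)}_{t-s}(x-y)\,dt\,dx$, so $\mathrm{cap}(B)\ge |B|/\|S_\alpha^{\ast} 1_B\|_{L^{q'}_tL^{p'}_x}$ and everything reduces to a sharp estimate of this anisotropic dual potential. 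Splitting the $(s,y)$-variable into the region within parabolic distance $r_0$ of $B$ (where $S_\alpha^{\ast} 1_B\approx r_0^{2\alpha}$) and the far region (where $S_\alpha^{\ast} 1_B\approx |B|\,K^{(\alpha)}_{t_0-s}(y-x_0)$), one finds $\|S_\alpha^{\ast} 1_B(s,\cdot)\|_{L^{p'}_x}\approx |B|\,(t_0-s)^{-\frac{n}{2\alpha p}}$, hence
$$
\big\|S_\alpha^{\ast} 1_B\big\|^{q'}_{L^{q'}_tL^{p'}_x}\approx |B|^{q'}\int_{r_0^{2\alpha}}^{t_0}\tau^{-\frac{nq'}{2\alpha p}}\,d\tau,\qquad \tau=t_0-s.
$$
Since $\frac{nq'}{2\alpha p}=1+\frac{\gamma q'}{2\alpha}$, when $\gamma>0$ the integral is governed by its lower limit $\tau\approx r_0^{2\alpha}$, giving $\|S_\alpha^{\ast} 1_B\|_{L^{q'}_tL^{p'}_x}\approx r_0^{(n+2\alpha)-\gamma}$ and $\mathrm{cap}(B)\gtrsim r_0^{\gamma}$; when $\gamma=0$ the integrand degenerates to $\tau^{-1}$, the integral equals $\ln(t_0/r_0^{2\alpha})\approx\ln\frac1{r_0}$, giving $\|S_\alpha^{\ast} 1_B\|_{L^{q'}_tL^{p'}_x}\approx |B|(\ln\frac1{r_0})^{1/q'}$ and $\mathrm{cap}(B)\gtrsim(\ln\frac1{r_0})^{-1/q'}=(\ln\frac1{r_0})^{\frac1q-1}$. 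Raising to the power $p\wedge q$ matches the upper bounds and proves both parts.

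I expect the crux to be the sharp mixed-norm control of $S_\alpha^{\ast} 1_B$, where the parabolic time-space anisotropy must be tracked carefully through the $L^{q'}_tL^{p'}_x$ norm; in particular the exact appearance of the logarithm at $\gamma=0$ hinges on the integrand degenerating precisely to $\tau^{-1}$ and on the \emph{finite} upper cut-off $\tau\approx t_0$ forced by the initial-time constraint $s>0$ (over all of $\rr^{1+n}$ the integral would diverge, so the capacity is genuinely sensitive to the boundary $\{t=0\}$). It is worth noting that $\ln(t_0/r_0^{2\alpha})$ is invariant under $\delta_\lambda$, consistent with $\mathrm{cap}(\delta_\lambda B)=\mathrm{cap}(B)$ in the critical case. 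A secondary technical point is arranging the multi-scale test function in (ii) with controlled (essentially disjoint) time-supports, which is exactly what converts the logarithmic number of admissible scales into the sharp factor $(\ln\frac1{r_0})^{\frac1q-1}$.
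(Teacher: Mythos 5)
Your argument is correct, but it reaches the two lower bounds by a genuinely different route than the paper, and that difference is worth recording. The paper proves the lower bound in (i) by invoking the Strichartz estimate of Theorem \ref{t11}(ii) (so that $S_\alpha F\ge 1$ on the ball forces $r_0^{(n+2\alpha)(p\wedge q)/\tilde{p}\text{-}\tilde{q}}$-type mass into $\|F\|_{L^q_tL^p_x}$), and the lower bound in (ii) by invoking the exponential-integrability estimate of Theorem \ref{t1new}(ii), which was itself proved via the Hardy--Littlewood maximal function; you instead run a single unified duality computation, testing $S_\alpha F\ge 1_B$ against Lebesgue measure on $B$ and reducing everything to the sharp mixed-norm estimate $\|S_\alpha^\ast 1_B(s,\cdot)\|_{L^{p'}_x}\approx |B|\,(t_0-s)^{-\frac{n}{2\alpha p}}$, so that both the power $r_0^{\gamma}$ and the logarithm $(\ln\frac1{r_0})^{\frac1q-1}$ fall out of one integral $\int_{r_0^{2\alpha}}^{t_0}\tau^{-1-\frac{\gamma q'}{2\alpha}}\,d\tau$ according to whether $\gamma>0$ or $\gamma=0$. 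This is more elementary and self-contained (only the two-sided kernel bound is used, no Strichartz input and no exponential estimate), it makes transparent why the critical case is logarithmic and why the cutoff at $\tau\approx t_0$ coming from the boundary $\{t=0\}$ is essential, and it handles $p=1$ with no extra care; what the paper's route buys in exchange is that its lower-bound mechanisms are reused elsewhere (the exponential estimate is a theorem in its own right, and the Strichartz bound also drives Corollary \ref{c31}(i)). On the upper bounds you are close to the paper: in (i) your time-shifted indicator ball is essentially the paper's stretched ball $B^{(\alpha)}_{r_0,\eta}(t_0,x_0)$ (the paper quotes Nishio--Yamada's kernel lower bound where you use the displayed two-sided estimate, which suffices), while in (ii) your dyadic superposition of $M\approx\ln\frac1{r_0}$ time-disjoint shifted balls with weights $c_k\approx (2^kr_0)^{-2\alpha}/M$ is a discretization of the paper's single anisotropic test function $F=(|t_0-t|^{\frac1{2\alpha}}+|x-x_0|)^{-2\alpha}1_E$ on a parabolic annulus; both yield $\|F\|^q_{L^q_tL^p_x}\lesssim M^{1-q}$ after normalization. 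Two small points to tighten in a write-up: in your (i) upper bound the claim $t-s\approx r_0^{2\alpha}$ fails near the common time-face of $B$ and $B'$, but the bound survives either by restricting the $s$-integration to the earlier half of $B'$ or by integrating the factor $(t-s)$ directly; and in your (ii) construction you should fix the lags (say $t_0-3(2^kr_0)^{2\alpha}$) so the time-supports are literally disjoint and note $2^Mr_0\lesssim t_0^{1/(2\alpha)}$, which still allows $M\approx\ln\frac1{r_0}$.
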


As an immediate consequence of Theorems \ref{t11}-\ref{t1new}-\ref{t12new}, we get not only three geometric inequalities linking two types of capacity, but also some Hausdorff dimension estimates for the blow-up sets which are sharp in the diagonal case $p=q$.

\begin{corollary}\label{c31}
\item{\rm(i)} Let $\mathscr{L}^{1}(A)$ and $\mathscr{L}^{n}(B)$ stand for the $1$-dimensional and $n$-dimensional Lebesgue measures of bounded Borel sets $A\subset\mathbb R_+$ and $B\subset\mathbb{R}^{n}$, respectively. If
$$
\begin{cases}
1\le p<\tilde{p}<\infty;\\
1<q<\tilde{q}<\infty;\\
\beta:=(p\wedge q)\big(\frac{n}{p}+\frac{2\alpha}{q}-2\alpha\big)>0;\\
(\frac1{q}-\frac1{\tilde{q}})+(\frac{n}{2\alpha})(\frac1{p}-\frac1{\tilde{p}})=1,
\end{cases}
$$
then there is a $\delta\in (0,1)$ such that
$$
\big(\mathscr{L}^{1}(A)\big)^{\frac{p\wedge q}{\tilde{q}}}\big(\mathscr{L}^{n}(B)\big)^{\frac{p\wedge q}{\tilde{p}}}
\lesssim C^{(\alpha)}_{p,q}(A\times B)\lesssim H^{\beta,\alpha}_\delta(A\times B).
$$

\item{\rm(ii)} Let $K$ be a compact subset of $\mathbb R^{1+n}_+$. If
$$
\begin{cases}
1\le p<\infty;\\
1<q<\infty;\\
\beta:=(p\wedge q)\big(\frac{n}{p}+\frac{2\alpha}{q}-2\alpha\big)>0,
\end{cases}
$$
then  there is a $\delta\in (0,1)$ such that
$$
C_{p,q}^{(\alpha)}(K)\lesssim H_\delta^{\beta,\alpha}(K),
$$
and hence
$$
\hbox{dim}^{(\alpha)}_H(\mathcal{B}[S_\alpha F;p,q])\le n-2\alpha(p\wedge q-1)\ \ \mbox{provided}\ \ n-2\alpha(p\wedge q-1)>0.
$$

\item{\rm(iii)} Let $K$ be a compact subset of $\mathbb R^{1+n}_+$. If
$$
\begin{cases}
1\le p<\infty;\\
1<q<\infty;\\
\frac{n}{p}+\frac{2\alpha}{q}-2\alpha=0;\\
\phi(r):=(\ln_+ \frac1r)^{-(p\wedge q)(1-\frac1q)}, \ \forall r\in\mathbb R_+;\\
\ln_+ t:=\max\{0,\ln t\}, \ \forall t\in\mathbb R_+,
\end{cases}
$$
then there is a $\delta\in (0,1)$ such that
$$
C_{p,q}^{(\alpha)}(K)\lesssim H_\delta^{\phi}(K),
$$
and hence
$$
H^{\phi_\epsilon,\alpha}(\mathcal{B}[S_\alpha F;p,q])=0\ \ \hbox{provided}\ \
\begin{cases}
n-2\alpha(p\wedge q-1)=0;\\
\phi_\epsilon(r):=(\ln_+ \frac1r)^{-p\wedge q-\epsilon},\ \forall r\in\mathbb R_+;\\
\epsilon>0.
\end{cases}
$$
\end{corollary}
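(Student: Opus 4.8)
The plan is to read all three parts off Theorems \ref{t11}--\ref{t12new} once the capacity $C^{(\az)}_{p,q}$ has been shown to be countably subadditive. I first dispose of (i). For its left inequality, let $F\ge0$ be any competitor for $C^{(\az)}_{p,q}(A\times B)$, i.e. $S_\az F\ge 1_{A\times B}$. Since the exponents $p,\tilde p,q,\tilde q$ satisfy exactly the scaling relation required in Theorem \ref{t11}(ii), that theorem gives $\|S_\az F\|_{L^{\tilde q}_tL^{\tilde p}_x(\rr^{1+n}_+)}\ls\|F\|_{L^q_tL^p_x(\rr^{1+n}_+)}$, while monotonicity of the mixed norm forces $\|S_\az F\|_{L^{\tilde q}_tL^{\tilde p}_x}\ge\|1_{A\times B}\|_{L^{\tilde q}_tL^{\tilde p}_x}=(\mathscr{L}^1(A))^{1/\tilde q}(\mathscr{L}^n(B))^{1/\tilde p}$. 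Taking the infimum over all such $F$ and raising to the power $p\wedge q$ yields the stated lower bound. The right inequality of (i) is the special case $K=A\times B$ of (ii), so it remains to prove (ii) and (iii).

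The engine for the upper bounds is a subadditivity whose proof hinges on the exponent $p\wedge q$ being precisely the one in the definition of $C^{(\az)}_{p,q}$. Given sets $E_j$ and competitors $F_j\ge0$ with $S_\az F_j\ge1_{E_j}$, I set $F:=\sup_j F_j$; positivity and linearity of $S_\az$ give $S_\az F\ge S_\az F_j\ge1_{E_j}$ for every $j$, hence $S_\az F\ge 1_{\cup_jE_j}$, so $F$ is admissible. When $p\le q$ (so $p\wedge q=p$ and $q/p\ge1$) I bound $\|F\|_{L^q_tL^p_x}^{p}=\big\|\int(\sup_jF_j)^p\,dx\big\|_{L^{q/p}_t}\le\sum_j\big\|\int F_j^p\,dx\big\|_{L^{q/p}_t}=\sum_j\|F_j\|_{L^q_tL^p_x}^{p}$ by Minkowski's inequality in $L^{q/p}_t$; when $q<p$ (so $p\wedge q=q$ and $q/p<1$) I use instead the concavity bound $(\sum_j a_j)^{q/p}\le\sum_j a_j^{q/p}$ to reach $\|F\|_{L^q_tL^p_x}^q\le\sum_j\|F_j\|_{L^q_tL^p_x}^q$. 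Either way $C^{(\az)}_{p,q}(\bigcup_jE_j)\le\sum_jC^{(\az)}_{p,q}(E_j)$. Now I cover a compact $K$ by parabolic balls $B^{(\az)}_{r_j}(t_j,x_j)$ with $r_j<\dz$, apply subadditivity, and then invoke Theorem \ref{t12new}(i) [resp. (ii)], which supplies $C^{(\az)}_{p,q}(B^{(\az)}_{r_j})\ls r_j^{\beta}$ [resp. $\ls\phi(r_j)$] for $r_j$ below a threshold $\dz$; summing and taking the infimum over admissible covers gives $C^{(\az)}_{p,q}(K)\ls H^{\beta,\az}_\dz(K)$ for (ii) and $C^{(\az)}_{p,q}(K)\ls H^{\phi}_\dz(K)$ for (iii).

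For the blow-up consequences I first record that $C^{(\az)}_{p,q}(\cb[S_\az F;p,q])=0$: if $F\ge0$ has $\|F\|_{L^q_tL^p_x}<\fz$, then for each $\lz>0$ the function $F/\lz$ is a competitor for $\{S_\az F\ge\lz\}\supseteq\cb[S_\az F;p,q]$, whence $C^{(\az)}_{p,q}(\cb[S_\az F;p,q])\le\lz^{-(p\wedge q)}\|F\|_{L^q_tL^p_x}^{p\wedge q}\to0$. To convert this vanishing into a dimension bound I will establish a converse Frostman-type lower bound for $C^{(\az)}_{p,q}$. Suppose $H^{d,\az}(\cb)>0$ for some $d>n-2\az(p\wedge q-1)$; then a compact $E\subseteq\cb$ carries a probability measure $\mu$ with $\mu(B^{(\az)}_r)\ls r^{d}$. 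For any competitor $F$ of $E$, Fubini and the kernel representation give $1=\mu(E)\le\int S_\az F\,d\mu=\iint_{\rr^{1+n}_+}F(\tau,y)\,W_\mu(\tau,y)\,dy\,d\tau$, where $W_\mu(\tau,y):=\int_{\{t>\tau\}}K^{(\az)}_{t-\tau}(x-y)\,d\mu(t,x)$; Hölder's inequality in the mixed norm then yields $1\le\|F\|_{L^q_tL^p_x}\,\|W_\mu\|_{L^{q'}_tL^{p'}_x}$, so $C^{(\az)}_{p,q}(E)\ge\|W_\mu\|_{L^{q'}_tL^{p'}_x}^{-(p\wedge q)}>0$ as soon as $W_\mu$ has finite dual mixed norm. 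This contradicts $C^{(\az)}_{p,q}(E)\le C^{(\az)}_{p,q}(\cb)=0$, forcing $H^{d,\az}(\cb)=0$ and hence $\dim^{(\az)}_H(\cb)\le n-2\az(p\wedge q-1)$. The borderline statement (iii) is the $d=0$ analog: running the same testing scheme against a measure with $\mu(B^{(\az)}_r)\ls\phi_\ez(r)$ again makes $\|W_\mu\|_{L^{q'}_tL^{p'}_x}$ finite, whence $H^{\phi_\ez,\az}(\cb)=0$.

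The routine parts above are genuinely routine; the crux is the threshold in the last paragraph, namely showing that the Frostman growth $\mu(B^{(\az)}_r)\ls r^{d}$ with $d>n-2\az(p\wedge q-1)$ forces $\|W_\mu\|_{L^{q'}_tL^{p'}_x(\rr^{1+n}_+)}<\fz$. This is exactly where the number $n-2\az(p\wedge q-1)$, rather than the ball exponent $\beta$, is produced, and it is the off-diagonal gap between these two quantities that explains why the estimate is sharp only when $p=q$. I expect to control $W_\mu$ by decomposing the integration over parabolic annuli $(t-\tau)^{1/2\az}+|x-y|\approx2^{-k}$, inserting the two-sided bound $K^{(\az)}_{t-\tau}(x-y)\approx(t-\tau)\big((t-\tau)^{1/2\az}+|x-y|\big)^{-(n+2\az)}$ together with the growth of $\mu$, and summing a geometric series whose convergence is dictated precisely by the sign of $d-(n-2\az(p\wedge q-1))$; balancing the anisotropic scaling in the $t$- and $x$-directions against the mixed dual exponents $(q',p')$ is the delicate point.
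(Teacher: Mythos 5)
Your part (i), your subadditivity lemma, and your covering argument are correct and coincide with the paper's route: the lower bound in (i) comes from Theorem \ref{t11}(ii) plus monotonicity of the mixed norm, the subadditivity is Proposition \ref{p23}(iii) (your two cases $p\le q$ and $q<p$ are exactly the paper's two cases), and the upper bounds $C^{(\alpha)}_{p,q}\lesssim H^{\beta,\alpha}_\delta$, $C^{(\alpha)}_{p,q}\lesssim H^{\phi,\alpha}_\delta$ follow from subadditivity plus Theorem \ref{t12new}(i)--(ii) applied to covers by parabolic balls of radius below the threshold where those ball estimates are valid. The genuine gap is in the dimension and measure-vanishing statements: their entire content is the claim that the Frostman growth $\mu\big(B^{(\alpha)}_r(t,x)\big)\lesssim r^d$ with $d>n-2\alpha(p\wedge q-1)$ (resp. $\mu\big(B^{(\alpha)}_r(t,x)\big)\lesssim\phi_\epsilon(r)$ in the borderline case) forces $\|S_\alpha^{\ast}\mu\|_{L^{q'}_tL^{p'}_x(\mathbb R^{1+n}_+)}<\infty$, and you never prove it --- you say you ``expect'' to get it from an annulus decomposition and call it ``the delicate point.'' Every other step in your last two paragraphs is soft (Hölder, the easy direction of Proposition \ref{p22}, the Frostman lemma), so this unproved estimate is not a detail: it is exactly where the threshold $n-2\alpha(p\wedge q-1)$ has to be produced, and as written the corollary is not established.

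The claim is true and your sketch can be completed, e.g. for $p\le q$: on the annulus $(t-\tau)^{1/2\alpha}+|x-y|\approx 2^{-j}$ the kernel is $\lesssim 2^{jn}$, so $S^\ast_\alpha\mu(\tau,y)\lesssim\sum_j 2^{jn}\mu\big(B^{(\alpha)}_{2^{-j}}(\tau,y)\big)$; Hölder in $x$ (using the growth for one factor and $\int_{\mathbb R^n}\mu(B^{(\alpha)}_\rho(\tau,y))\,dy\lesssim\rho^n\mu(\{|t-\tau|<\rho^{2\alpha}\})$ for the other) and then Hölder in $t$ over the compact time-support of $\mu$ give $\big\|\mu\big(B^{(\alpha)}_\rho(\cdot,\cdot)\big)\big\|_{L^{q'}_tL^{p'}_x}\lesssim_\mu \rho^{\frac dp+\frac{n+2\alpha}{p'}}$, and the resulting geometric series $\sum_j 2^{j\left(n-\frac dp-\frac{n+2\alpha}{p'}\right)}$ converges precisely when $d>n-2\alpha(p-1)$; the case $p>q$ and the logarithmic case are analogous. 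It is worth noting that the paper avoids this global dual-norm bound entirely: it tests $S_\alpha G\ge 1_K$ directly against the Frostman measure, splits into a far part and a dyadic near part, and derives the self-improving inequality
$$
\mu(K)\lesssim c_0\|G\|_{L^q_tL^p_x(\mathbb R^{1+n}_+)}\Big(\mu(K)+\mu(K)^{1-\frac{1}{p\wedge q}}\,\mathrm{III}\Big),
\qquad
\mathrm{III}:=\int_0^{R_0}\phi(r)^{\frac{1}{p\wedge q}}\,r^{-1+2\alpha-\frac{n+2\alpha}{p\wedge q}}\,dr,
$$
so that $C^{(\alpha)}_{p,q}(K)=0$ (competitors $G$ of arbitrarily small norm) forces $\mu(K)=0$ whenever $\mathrm{III}<\infty$; one then checks $\mathrm{III}<\infty$ for $\phi(r)=r^\eta$ with $\eta>n-2\alpha(p\wedge q-1)$ and for $\phi_\epsilon$. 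Thus the paper only needs the convergence of a one-dimensional integral, while your route needs the stronger mixed-norm statement for $S^\ast_\alpha\mu$ --- which is feasible, but is precisely the step you left out.
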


\section{Basics of the $(\alpha,p,q)$-capacity}\label{s2}

In order to demonstrate Theorems \ref{t1new}-\ref{t12new} and Corollary \ref{c31}, we need to know some basic facts on the $(\alpha,p,q)$-capacity.

\subsection{Duality of the $(\alpha,p,q)$-capacity} To establish the adjoint formulation of $C^{(\alpha)}_{p,q}$, we need to find out adjoint operator $S_\alpha^{\ast}$ of $S_\alpha$. Note that for any $F,G\in C_{0}^{\infty}(\mathbb{R}^{1+n}_+)$ one has
$$
\iint_{\mathbb R^{1+n}_+}S_\alpha F(t,x) G(t,x)\,dxdt=\int_{\mathbb R^{1+n}_+}F(t,x)\left(\int_{t}^{\infty}e^{-(s-t)(-\Delta)^{\alpha}}G(s,x)ds\right) dxdt.
$$
Thus, $S_\alpha^{\ast}G$ is given by setting, for all $(t,x)\in \mathbb R^{1+n}_+$,
$$
\big(S_\alpha^{\ast}G\big)(t,x):=\int_{t}^{\infty}e^{-(s-t)(-\Delta)^{\alpha}}
G(s,x)\,ds, \ \forall G\in C^\infty_0(\mathbb R^{1+n}_+).
$$
The definition of $S_\alpha^{\ast}$ can be extended to the family of Borel measures $\mu$ with compact support in $\mathbb R^{1+n}_+$. In fact, note that if $F$ is continuous and has a compact support in $\mathbb R^{1+n}_+$ and $\|\mu\|_1$ stands for the total variation of $\mu$ then a simple calculation with the equivalent estimate
$$
K^{(\alpha)}_t(x)\approx t(t^{1/(2\alpha)}+|x|)^{-n-2\alpha},\quad\forall (t,x)\in\mathbb R^{1+n}_+,
$$
gives
$$
\left|\iint_{\mathbb R^{1+n}_+}S_\alpha F\,d\mu\right|\lesssim\|\mu\|_1\sup_{(t,x)\in\mathbb R^{1+n}_+}|F(t,x)|.
$$
Hence an application of the Riesz representation theorem yields a Borel measure $\nu$ on $\mathbb R_+^{1+n}$ such that
$$
\iint_{\mathbb R^{1+n}_+}S_\alpha F\,d\mu=\int_{\mathbb R_+^{1+n}}F\,d\nu.
$$
This indicates that $S_\alpha^\ast\mu$ may be defined by $\nu$.

The above analysis leads to a dual description of the $(\alpha,p,q)$-capacity.

\begin{proposition}\label{p22} For a compact subset $K$ of $\mathbb R^{1+n}_+$ let $\mathcal{M_+}(K)$ be the class of all positive measures on $\mathbb R^{1+n}_+$ supported by $K$. If
$$
\begin{cases}
1<p,q<\infty;\\
p'=p/(p-1);\\
q'=q/(q-1),
\end{cases}
$$
then
$$
C_{p,q}^{(\alpha)}(K)=\sup\big\{\|\mu\|_1^{p\wedge q}:\ \mu\in\mathcal{M_+}(K)
\ \ \&\ \ \|S_\alpha^\ast\mu\|_{L_t^{q'}L^{p'}_x(\mathbb R^{1+n}_+)}\le 1\big\}=:\tilde{C}_{p,q}^{(\alpha)}(K).
$$
\end{proposition}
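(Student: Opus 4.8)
The proposal is to strip off the common exponent $p\wedge q$ and reduce the assertion to the linear duality
$$
\lambda:=\inf\big\{\|F\|_{L^q_tL^p_x(\mathbb R^{1+n}_+)}:\ F\ge 0,\ S_\alpha F\ge 1\ \text{on}\ K\big\}
$$
and
$$
\Lambda:=\sup\big\{\|\mu\|_1:\ \mu\in\mathcal{M_+}(K),\ \|S_\alpha^\ast\mu\|_{L_t^{q'}L^{p'}_x(\mathbb R^{1+n}_+)}\le 1\big\},
$$
followed by the claim $\lambda=\Lambda$. Since $F\mapsto\|F\|^{p\wedge q}$ is increasing and the constraint $S_\alpha F\ge 1_K$ is compatible with scaling, one gets $C_{p,q}^{(\alpha)}(K)=\lambda^{p\wedge q}$ and $\tilde C_{p,q}^{(\alpha)}(K)=\Lambda^{p\wedge q}$, so $\lambda=\Lambda$ suffices. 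The inequality $\Lambda\le\lambda$ is the easy half: for admissible $F$ and $\mu$, the support condition on $\mu$ with $S_\alpha F\ge 1$ on $K$ gives $\|\mu\|_1=\iint 1_K\,d\mu\le\iint S_\alpha F\,d\mu$, and the adjoint relation $\iint S_\alpha F\,d\mu=\iint F\,(S_\alpha^\ast\mu)\,dx\,dt$ followed by Hölder's inequality in the dual pair $(L^q_tL^p_x)^\ast=L^{q'}_tL^{p'}_x$ bounds this by $\|F\|_{L^q_tL^p_x}\|S_\alpha^\ast\mu\|_{L^{q'}_tL^{p'}_x}\le\|F\|_{L^q_tL^p_x}$; taking $\sup_\mu$ and $\inf_F$ yields $\Lambda\le\lambda$.

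For the reverse inequality I would pass to a minimax formulation. Writing $\mathcal P(K)$ for the probability measures on $K$, and using that $S_\alpha F$ is lower semicontinuous when $F\ge 0$ (it is the increasing limit of the continuous potentials of the truncations $\min\{F,k\}\,1_{\{|(t,x)|<k\}}$, continuity coming from Theorem \ref{t1new}), a lower semicontinuous function attains its infimum on the compact set $K$ at a point carrying a Dirac mass, so $\inf_{K}S_\alpha F=\inf_{\mu\in\mathcal P(K)}\iint S_\alpha F\,d\mu$. After normalizing by scaling this gives
$$
\frac1\lambda=\sup_{F\ge 0,\ \|F\|_{L^q_tL^p_x}\le 1}\ \inf_{\mu\in\mathcal P(K)}\iint S_\alpha F\,d\mu .
$$
The bilinear form $(F,\mu)\mapsto\iint S_\alpha F\,d\mu=\iint F\,(S_\alpha^\ast\mu)\,dx\,dt$ is linear (hence concave) and weakly upper semicontinuous in $F$ on the convex set $\{F\ge 0:\|F\|_{L^q_tL^p_x}\le 1\}$, which is weakly compact because $L^q_tL^p_x$ is reflexive for $1<p,q<\infty$, while it is linear and weak-$\ast$ lower semicontinuous in $\mu$ on the convex, weak-$\ast$ compact set $\mathcal P(K)$. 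Sion's minimax theorem then permits exchanging the supremum and infimum, so that
$$
\frac1\lambda=\inf_{\mu\in\mathcal P(K)}\ \sup_{F\ge 0,\ \|F\|_{L^q_tL^p_x}\le 1}\iint F\,(S_\alpha^\ast\mu)\,dx\,dt=\inf_{\mu\in\mathcal P(K)}\|S_\alpha^\ast\mu\|_{L^{q'}_tL^{p'}_x},
$$
where the inner supremum equals the dual norm because $S_\alpha^\ast\mu\ge 0$ (positivity of the kernel $K^{(\alpha)}_t$), so the sign constraint $F\ge 0$ is inactive. Finally, writing $\mu=\|\mu\|_1\,\widehat\mu$ with $\widehat\mu\in\mathcal P(K)$ and using homogeneity of $S_\alpha^\ast$, admissibility $\|S_\alpha^\ast\mu\|_{L^{q'}_tL^{p'}_x}\le1$ reads $\|\mu\|_1\le\|S_\alpha^\ast\widehat\mu\|_{L^{q'}_tL^{p'}_x}^{-1}$, whence $\Lambda=\big(\inf_{\widehat\mu\in\mathcal P(K)}\|S_\alpha^\ast\widehat\mu\|_{L^{q'}_tL^{p'}_x}\big)^{-1}=\lambda$.

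The step I expect to be the main obstacle is the rigorous use of the minimax theorem, since the payoff can take the value $+\infty$ on measures $\mu$ whose potential $S_\alpha^\ast\mu$ fails to lie in $L^{q'}_tL^{p'}_x$. To deal with this I would either invoke a version of the minimax theorem (Ky Fan's) admitting extended-real-valued, appropriately semicontinuous payoffs, or first replace $K^{(\alpha)}_t$ by a bounded truncation, run the finite-valued minimax, and pass to the limit using the monotone convergence furnished by the positivity of the kernel. Two routine side checks remain: the weak (equivalently, by Mazur's lemma, norm) closedness of the nonnegative cone intersected with the unit ball, and the verification that $S_\alpha^\ast$ carries the admissible members of $\mathcal{M_+}(K)$ into the positive cone of $L^{q'}_tL^{p'}_x$—the dual counterpart of the boundedness in Theorem \ref{t11} together with the estimate $K^{(\alpha)}_t(x)\approx t\,(t^{1/(2\alpha)}+|x|)^{-n-2\alpha}$. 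The degenerate cases $\lambda\in\{0,\infty\}$ are settled directly from the definitions.
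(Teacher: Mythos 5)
Your proposal is correct and follows essentially the same route as the paper: the easy inequality $\tilde{C}_{p,q}^{(\alpha)}(K)\le C_{p,q}^{(\alpha)}(K)$ via the adjoint relation plus H\"older, and the reverse via a minimax exchange of the pairing $\iint S_\alpha F\,d\mu=\iint F\,(S_\alpha^\ast\mu)\,dx\,dt$ over exactly the same two sets, namely the probability measures on $K$ and the unit ball of nonnegative functions in $L^q_tL^p_x(\mathbb R^{1+n}_+)$. The only difference is bookkeeping: where you invoke Sion's theorem and then must patch the extended-valued payoff (your truncation-plus-monotone-convergence fix does work), the paper simply cites the minimax theorem of Adams--Hedberg \cite[Theorem 2.4.1]{AH}, which is formulated to accommodate such lower semicontinuous, possibly infinite, capacitary pairings directly.
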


\begin{proof} Since
\begin{eqnarray*}
\|\mu\|_1&=&\mu(K)\\
&\leq& \iint_{\mathbb R^{1+n}_+}S_\alpha F\,d\mu\\
&=&\iint_{\mathbb R^{1+n}_+}F\,S_\alpha^{\ast}\mu\, dxdt\\
&\leq&\|F\|_{L^{q}_{t}L^{p}_{x}(\mathbb R^{1+n}_+)}
\|S_\alpha^{\ast}\mu\|_{L^{q'}_{t}L^{p'}_{x}(\mathbb R^{1+n}_+)},
\end{eqnarray*}
one has
\begin{equation*}
\tilde{C}_{p,q}^{(\alpha)}(K)\leq C_{p,q}^{(\alpha)}(K)
\end{equation*}
for any compact set $K\subset \mathbb R^{1+n}_+$. Moreover, this last inequality is actually an equality - in fact, if
$$
\begin{cases}
X=\{\mu:\ \ \mu\in\mathcal{M}_+(K)\ \&\ \mu(K)=1\};\\
Y=\Big\{F:\ \ 0\le F\in L^q_tL^p_x(\mathbb R^{1+n}_+)\ \&\ \|F\|_{L^q_tL^p_x(\mathbb R^{1+n}_+)}\le 1\Big\};\\
Z=\Big\{F:\ \ 0\le F\in L^q_tL^p_x(\mathbb R^{1+n}_+)\ \&\ S_\alpha F\ge 1_K\Big\};\\
\mathsf{E}(\mu,F)=\iint_{\mathbb R^{1+n}_+}(S_\alpha^\ast\mu)F\,dxdt=\iint_{\mathbb R^{1+n}_+}S_\alpha F\,d\mu,
\end{cases}
$$
then an easy computation, along with an application of \cite[Theorem 2.4.1]{AH}, gives
\begin{eqnarray*}
\min_{\mu\in\mathcal{M}_+(K)}
\frac{\|S^\ast_\alpha\mu\|_{L^{q'}_tL^{p'}_x(\mathbb R^{1+n}_+)}}{\mu(K)}&=&
\min_{\mu\in X}\sup_{F\in Y}\mathsf{E}(\mu,F)\\
&=&\sup_{F\in Y}\min_{\mu\in X}\mathsf{E}(\mu,F)\\
&=&\sup_{0\le F\in L^q_tL^p_x(\mathbb R^{1+n}_+)}\frac{\min_{(t,x)\in K}S_\alpha F(t,x)}{\|F\|_{L^{q}_tL^{p}_x(\mathbb R^{1+n}_+)}}\\
&=&\sup_{F\in Z}\|F\|^{-1}_{L^q_tL^p_x(\mathbb R^{1+n}_+)}\\
&=&\big(C_{p,q}^{(\alpha)}(K)\big)^{-\frac1{p\wedge q}},
\end{eqnarray*}
and hence
\begin{equation*}
\tilde{C}_{p,q}^{(\alpha)}(K)\geq C_{p,q}^{(\alpha)}(K),
\end{equation*}
thereby the desired equality follows.
\end{proof}

\subsection{Essentialness of the $(\alpha,p,q)$-capacity} Some fundamental
properties of the $(\alpha,p,q)$-capacity are stated in the following proposition.

\begin{proposition}\label{p23}

\item{\rm(i)} $C_{p,q}^{(\alpha)}(\emptyset)=0$. Moreover, under $\emptyset\not=K\subset\mathbb R^{1+n}_+$, $C^{(\alpha)}_{p,q}(K)=0$ if and only if there exists $0\le F\in L^q_tL^p_x(\mathbb R^{1+n}_+)$ such that
$$
K\subseteq\{(t,x)\in\mathbb R^{1+n}_+:\ S_\alpha F(t,x)=\infty\}.
$$
\item{\rm(ii)} $K_1\subseteq K_2\subset\mathbb R^{1+n}_+\Longrightarrow C^{(\alpha)}_{p,q}(K_1)\le C^{(\alpha)}_{p,q}(K_2)$.

\item{\rm (iii)}
$$
C^{(\alpha)}_{p,q}\Big(\bigcup_{j=1}^\infty K_{j}\Big)\leq \sum_{j=1}^\infty C^{(\alpha)}_{p,q}(K_{j})
$$
for any sequence $\{K_{j}\}_{j=1}^\infty$ of subsets of $\mathbb{R}^{n+1}_{+}.$

\item{\rm (iv)} $C^{(\alpha)}_{p,q}\big(K+(0,x_{0})\big)=C^{(\alpha)}_{p,q}(K)$ for any $K\subset \mathbb{R}^{n+1}_{+}$ and any $x_{0}\in \mathbb{R}^{n}.$
\end{proposition}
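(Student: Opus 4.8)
The plan is to verify the four capacity axioms in order, each following from the variational definition of $C^{(\alpha)}_{p,q}$ together with elementary properties of the operator $S_\alpha$ and the norm $\|\cdot\|_{L^q_tL^p_x}$.

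For (i), the equality $C^{(\alpha)}_{p,q}(\emptyset)=0$ is immediate since $1_\emptyset\equiv 0$, so $F\equiv 0$ is admissible and has zero norm. For the characterization of null sets, I would argue in two directions. If $C^{(\alpha)}_{p,q}(K)=0$, then for each $j$ I can choose an admissible $0\le F_j\in L^q_tL^p_x$ with $S_\alpha F_j\ge 1_K$ and $\|F_j\|^{p\wedge q}_{L^q_tL^p_x}\le 2^{-j}$; setting $F:=\sum_{j=1}^\infty F_j$ gives a nonnegative function whose norm is controlled by the triangle inequality (so $F\in L^q_tL^p_x$), and by the linearity and positivity of $S_\alpha$ one has $S_\alpha F\ge \sum_j S_\alpha F_j\ge \sum_j 1_K=\infty$ on $K$. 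Conversely, if such an $F$ exists, then for every $t>0$ the function $t^{-1}F$ is admissible for $K$ (since $S_\alpha(t^{-1}F)=\infty\ge 1_K$ on $K$), whence $C^{(\alpha)}_{p,q}(K)\le t^{-(p\wedge q)}\|F\|^{p\wedge q}_{L^q_tL^p_x}\to 0$ as $t\to\infty$.

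Properties (ii) and (iii) are the standard monotonicity and countable subadditivity of an outer capacity. For (ii), any $F$ admissible for $K_2$ satisfies $S_\alpha F\ge 1_{K_2}\ge 1_{K_1}$, hence is admissible for $K_1$, so the infimum defining $C^{(\alpha)}_{p,q}(K_1)$ is taken over a larger class and cannot exceed $C^{(\alpha)}_{p,q}(K_2)$. For (iii) I would again use a near-optimal sequence: given $\epsilon>0$, pick $0\le F_j$ with $S_\alpha F_j\ge 1_{K_j}$ and $\|F_j\|^{p\wedge q}_{L^q_tL^p_x}\le C^{(\alpha)}_{p,q}(K_j)+2^{-j}\epsilon$, and set $F:=\sup_j F_j$ (or $\sum_j F_j$). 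Then $S_\alpha F\ge 1_{\bigcup_j K_j}$ by positivity of the kernel, and the subadditivity of the quasi-norm $\|\cdot\|^{p\wedge q}_{L^q_tL^p_x}$ yields the claimed bound after letting $\epsilon\to 0$. The one point requiring care is that $p\wedge q$ may be less than $1$ only when $p<1$, which is excluded here since $p\ge 1$ and $q>1$; thus $p\wedge q\ge 1$ and $\|\cdot\|^{p\wedge q}_{L^q_tL^p_x}$ is genuinely subadditive (indeed $\|F\|\le\sum_j\|F_j\|$ gives $\|F\|^{p\wedge q}\le(\sum_j\|F_j\|)^{p\wedge q}\le\sum_j\|F_j\|^{p\wedge q}$ when $p\wedge q\ge 1$ fails—so I must instead use that for $a_j\ge 0$ one has $(\sum a_j)^\theta\le\sum a_j^\theta$ precisely when $\theta\le 1$). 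This sign of the exponent is the only genuine subtlety, and I expect it to be the main obstacle: I would resolve it by noting $(p\wedge q)\ge 1$ forces the reverse inequality, so the correct argument is to bound $\|\sum_j F_j\|_{L^q_tL^p_x}\le\sum_j\|F_j\|_{L^q_tL^p_x}$ and raise to the power $p\wedge q$ using concavity/convexity appropriately, or equivalently to work directly with the superadditive structure by choosing $F=\sum_j F_j$ and estimating the mixed norm.

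Finally, (iv) is translation invariance in the spatial variable. The key observation is that $S_\alpha$ commutes with spatial translations: writing $\tau_{x_0}$ for translation by $x_0$ in $x$, the kernel $K^{(\alpha)}_{t-s}$ depends on $x$ only through differences, so $S_\alpha(\tau_{x_0}F)=\tau_{x_0}(S_\alpha F)$, and the norm $\|\cdot\|_{L^q_tL^p_x}$ is invariant under $\tau_{x_0}$ by translation invariance of Lebesgue measure. Hence $F\mapsto\tau_{x_0}F$ is a norm-preserving bijection between the admissible classes for $K$ and for $K+(0,x_0)$, giving equality of the two capacities. I expect (iv) to be routine, with the only thing worth stating explicitly being the commutation relation $S_\alpha\circ\tau_{x_0}=\tau_{x_0}\circ S_\alpha$.
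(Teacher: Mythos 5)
Your treatments of (i), (ii) and (iv) are correct and essentially coincide with the paper's: the same series construction $F=\sum_j F_j$ for one half of (i) (and your scaling argument with $t^{-1}F$ is precisely the paper's Chebyshev-type inequality $C^{(\alpha)}_{p,q}(\{S_\alpha F\ge\lambda\})\le\lambda^{-(p\wedge q)}\|F\|^{p\wedge q}_{L^q_tL^p_x}$ in disguise), the same comparison of admissible classes for (ii), and the same commutation/translation-invariance observation for (iv).

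Part (iii) is where there is a genuine gap, and you half-see it yourself. You correctly observe that, since $p\wedge q\ge1$, the inequality $(\sum_j a_j)^{p\wedge q}\le\sum_j a_j^{p\wedge q}$ goes the wrong way; but your proposed repair --- take $F=\sum_j F_j$, apply the triangle inequality to the mixed norm, and then ``raise to the power $p\wedge q$ using concavity/convexity appropriately'' --- cannot be completed, because the inequality $\|\sum_j F_j\|^{p\wedge q}_{L^q_tL^p_x}\le\sum_j\|F_j\|^{p\wedge q}_{L^q_tL^p_x}$ is simply false for general nonnegative $F_j$: take $p=q=2$ and $F_1=\cdots=F_N=f/N$ with $0\le f$ nonzero; the left-hand side equals $\|f\|^2_{L^2_tL^2_x}$ while the right-hand side equals $\|f\|^2_{L^2_tL^2_x}/N$. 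No manipulation performed at the level of the norms can close this; the subadditivity must be exploited \emph{pointwise}, before any norm is taken. That is exactly what the paper does: take $F=\sup_j F_j$ (still admissible for $\bigcup_j K_j$ because the kernel of $S_\alpha$ is positive), use the pointwise bound $F^p\le\sum_j F_j^p$, set $a_j(t):=\int_{\mathbb R^n}F_j(t,x)^p\,dx$, and split into two cases according to which of $p,q$ is smaller. If $p\ge q$ (so $p\wedge q=q$), then $q/p\le 1$, so $s\mapsto s^{q/p}$ is subadditive and
$$
\|F\|^{q}_{L^q_tL^p_x}\le\int_0^\infty\Big(\sum_j a_j(t)\Big)^{q/p}\,dt\le\sum_j\int_0^\infty a_j(t)^{q/p}\,dt=\sum_j\|F_j\|^q_{L^q_tL^p_x}.
$$
If $p<q$ (so $p\wedge q=p$), then $q/p>1$ and Minkowski's (triangle) inequality in $L^{q/p}_t$ gives
$$
\|F\|^{p}_{L^q_tL^p_x}\le\Big\|\sum_j a_j\Big\|_{L^{q/p}_t}\le\sum_j\|a_j\|_{L^{q/p}_t}=\sum_j\|F_j\|^p_{L^q_tL^p_x}.
$$
In both cases one obtains $\|\sup_j F_j\|^{p\wedge q}_{L^q_tL^p_x}\le\sum_j\|F_j\|^{p\wedge q}_{L^q_tL^p_x}$, which combined with your $2^{-j}\epsilon$ selection of the $F_j$ finishes (iii). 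This estimate is also the structural reason the capacity is defined with the exponent $p\wedge q$ in the first place: it is the exact power for which countable subadditivity holds.
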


\begin{proof} (i) Only the `iff' part needs an argument. To do so, note that for $\lambda>0$ the inequality
$$
C_{p,q}^{(\alpha)}\big(\{(t,x)\in\mathbb R^{1+n}_+:\ F\ge 0\ \&\ S_\alpha F(t,x)\ge\lambda\}\big)\le \lambda^{-p\wedge q}\|F\|^{p\wedge q}_{L^q_tL^p(\mathbb R^{1+n}_+)}
$$
follows from the definition of $C_{p,q}^{(\alpha)}$. Clearly, this implies
$$
C_{p,q}^{(\alpha)}\big(\mathcal{B}[S_\alpha F;p,q]\big)=0.
$$
Therefore, if $0\le F\in L^q_tL^p_x(\mathbb R^{1+n}_+)$ enjoys $K\subseteq \mathcal{B}[S_\alpha F;p,q],$
then $C_{p,q}^{(\alpha)}(K)=0$ follows from (ii) - the monotonicity of capacity.

Conversely, if $C_{p,q}^{(\alpha)}(K)=0$ then taking nonnegative functions $F_j$ such that
$$
\begin{cases}
S_\alpha F_j(t,x)\ge 1,\ \ \forall (t,x)\in K\\
\hbox{and}\\
\|F_j\|_{L^q_tL^p(\mathbb R^{1+n}_+)}<2^{-j}
\end{cases}
$$
derives that $F=\sum_{j=1}^\infty F_j$ enjoys the required properties.

(ii) This follows from the definition of $(\alpha,p,q)$-capacity.

(iii) The forthcoming argument is standard; see also \cite{Ad, Xiao, AdXi}.

{\it Case 1: $p\geq q$.}\ If we choose $F_{j}$ with $S_\alpha F_{j}\geq 1$ on $K_{j},$ then
$F=\sup_{j=1,2,3,...} F_{j}$ satisfies $S_\alpha F\geq 1$ on $\bigcup_{j=1}^\infty K_{j}$
   and
$$
\|F\|_{L^{q}_{t}L^{p}_{x}(\mathbb{R}^{1+n}_{+})}^{q}
\leq\int_{0}^{\infty}\left(\sum_{j=1}^\infty\int_{\mathbb{R}^{n}}|F_{j}|^{p}dx\right)^{\frac{q}{p}}\,dt
\leq\sum_{j=1}^\infty\int_{0}^{\infty}\left(\int_{\mathbb{R}^{n}}|F_{j}|^{p}dx\right)^{\frac{q}{p}}dt.
$$
So, the desired inequality follows.

{\it Case 2: $p<q$.}\ Now, the Minkowski inequality implies that
\begin{align*}
\|F\|_{L^{q}_{t}L^{p}_{x}(\mathbb{R}^{1+n}_{+})}^{p}&\leq
\left[\int_{0}^{\infty}\left(\sum_{j=1}^\infty\int_{\mathbb{R}^{n}}|F_{j}|^{p}dx\right)^{\frac{q}{p}}dt\right]^{\frac{p}{q}}\\
&\leq \sum_{j=1}^\infty\left[\int_{0}^{\infty}\left(\int_{\mathbb{R}^{n}}|F_{j}|^{p}dx\right)^{\frac{q}{p}}dt\right]^{\frac{p}{q}},
\end{align*}
whence deducing the desired inequality.

(iv) This is a consequence of the following implication:
$$
F_{x_{0}}(t,x)=F(t,x+x_{0})\Longrightarrow\|F_{x_{0}}\|_{L^{q}_{t}L^{p}_{x}(\mathbb{R}^{1+n}_{+})}
=\|F\|_{L^{q}_{t}L^{p}_{x}(\mathbb{R}^{1+n}_{+})},
$$
which completes the proof of Proposition \ref{p23}.
\end{proof}

\section{Proofs of Theorems \ref{t1new}-\ref{t12new} and Corollary \ref{c31}}\label{s3}

Now, we are ready to carry out the task as just mentioned in the title of Section \ref{s3}.

\subsection{Proof of Theorem \ref{t1new}} (i) Let
$$
\begin{cases}
(t,x)\in\mathbb R^{1+n}_+;\\
(t_0,x_0)\in\mathbb R^{1+n}_+;\\
f\in L^p(\mathbb R^n);\\
p\in [1,\infty];\\
0\le t_1<t_2<\infty.
\end{cases}
$$
Since $K^{(\alpha)}_{t_0}(\cdot)$ is of $C^\infty(\mathbb R^n)$, one has that $R_\alpha f(t_0,x)=e^{-t_0(-\Delta)^\alpha}f(x)$ is of $C^\infty(\mathbb R^n)$ too. Meanwhile, for $x\in\mathbb R^n$ one gets
$$
R_\alpha f(t_1,x)-R_\alpha f(t_2,x)=\int_{t_1}^{t_2}(-\Delta)^\alpha e^{-t(-\Delta)^\alpha}f(x)\,dt.
$$
Note that the kernel $\tilde{K}^{(\alpha)}_t(\cdot)$ of $(-\Delta)^\alpha e^{-t(-\Delta)^\alpha}$ obeys
$|\tilde{K}^{(\alpha)}_t(x)|\lesssim {(t^{\frac 1{2\az}}+|x|)^{-n-2\alpha}};$
see also \cite[Lemma 2.2 \& (2.5)]{MiaoYuanZhang}. So, an application of \cite[Lemma 3.1]{MiaoYuanZhang} gives
$$
\Big\|(-\Delta)^\alpha e^{-t(-\Delta)^\alpha}f\Big\|_{L^\infty(\mathbb R^n)}
\lesssim
\begin{cases}
   t^{-1-\frac{n}{2\alpha p}}\|f\|_{L^p(\mathbb R^n)}\ \ \hbox{for}\ \ p\in [1,\infty);\\
    t^{-1}\|f\|_{L^\infty(\mathbb R^n)}\ \ \hbox{for}\ \ p=\infty,
\end{cases}
$$
and hence
$$
|R_\alpha f(t_1,x)-R_\alpha f(t_2,x)|\lesssim
\|f\|_{L^p(\mathbb R^n)}
\begin{cases}\big|t_1^{-\frac{n}{2\alpha p}}-t_2^{-\frac{n}{2\alpha p}}\big|\ \ \hbox{for}\ \ p\in [1,\infty);\\
|\ln t_1-\ln t_2|\ \ \hbox{for}\ \ p=\infty.
\end{cases}
$$
Putting the above facts together yields
\begin{align*}
|R_\alpha f(t,x)-R_\alpha f(t_0,x_0)|&\le|R_\alpha f(t_0,x)-R_\alpha f(t_0,x_0)|+|R_\alpha f(t,x)-R_\alpha f(t_0,x)|\\
&\to 0\ \ \hbox{as}\ \ (t,x)\to (t_0,x_0).
\end{align*}
Therefore $R_\alpha f$ is of $C(\mathbb R^{1+n}_+)$.

(ii) Let $(t,x)\in \rr^{1+n}_+$ be fixed.
Then we have
$$
    |S_\az F(t,x)|\le\int_0^t \int_\rn K_{t-s}^{(\az)}(x-y)|F(s,y)|\,dy\,ds=\mathrm{I}+\mathrm{II},
    $$
    where
    $$
    \begin{cases}
    \mathrm{I}:=\int_0^r\int_\rn K_{t-s}^{(\az)}(x-y)|F(s,y)|\,dy\,ds; \\
    \mathrm{II}:=\int_r^t\int_\rn K_{t-s}^{(\az)}(x-y)|F(s,y)|\,dy\,ds.
\end{cases}
$$
From the H\"older inequality and the assumption
$\frac np+\frac {2\az}q=2\az$ it follows that
\begin{eqnarray*}
\mathrm{I}&&\lesssim\int_0^r\int_{\rn}
   \frac{|t-s|}{(|t-s|^{\frac {1}{2\az}}+|x-y|)^{n+2\alpha}}|F(s,y)|\,dy\,ds\\
   &&\lesssim\int_0^r|t-s| \|F(s,\cdot)\|_{L^p_x(\rn)}\lf(\int_{\rn}
   \frac{dy}{(|t-s|^{\frac {1}{2\az}}+|x-y|)^{(n+2\alpha)(\frac{p}{p-1})}}\r)^{\frac{p-1}{p}}\,ds\\
&&\lesssim\int_0^r\frac{\|F(s,\cdot)\|_{L^p_x(\rn)}}{|t-s|^{\frac{n}{2p\az}}}\,ds\\
&&\lesssim\|F\|_{L^q_tL^p_x(\rr^{1+n}_+)}
\lf(\int_0^r\frac{ds}{|t-s|^{\frac{n}{2p\az}(\frac{q}{q-1})}}\r)^{\frac{q-1}{q}}\\
&&\lesssim\|F\|_{L^q_tL^p_x(\rr^{1+n}_+)}\lf(\ln \frac{t}{t-r}\r)^{\frac{q-1}{q}}.
\end{eqnarray*}
Similarly, by using $\mathrm{M}_\mathbb R$ - the Hardy-Littlewood maximal function on $\mathbb R$, we obtain
\begin{eqnarray*}
\mathrm{II}&&\lesssim\int_r^t\int_{\rn}
   \frac{|t-s|}{(|t-s|^{\frac {1}{2\az}}+|x-y|)^{n+2\alpha}}|F(s,y)|\,dy\,ds\\
&&\lesssim\int_r^t\frac{\|F(s,\cdot)\|_{L^p_x(\rn)}}{|t-s|^{\frac{n}{2p\az}}}\,ds\\
&&\lesssim\sum_{k=0}^{-\fz}\int_{t-2^k|t-r|}^{t-2^{k-1}|t-r|}
\frac{\|F(s,\cdot)\|_{L^p_x(\rn)}}{|t-s|^{\frac{n}{2p\az}}}\,ds\\
&&\lesssim\sum_{k=0}^{-\fz}\frac{1}{(2^k|t-r|)^{\frac{n}{2p\az}}} \int_{t-2^k|t-r|}^{t}
\|F(s,\cdot)\|_{L^p_x(\rn)}\,ds\\
&&\lesssim\sum_{k=0}^{-\fz}(2^k|t-r|)^{1-\frac{n}{2p\az}}\cm_\rr(\|F(\cdot,\cdot)\|_{L^p_x(\rn)})(t) \\
&&\lesssim|t-r|^{1/q}\cm_\rr(\|F(\cdot,\cdot)\|_{L^p_x(\rn)})(t).
\end{eqnarray*}
Via choosing $r\in (0,t)$ such that
$$|t-r|^{1/q}=\min\lf\{t^{1/q}, \,
\frac{\|F\|_{L^q_tL^p_x(\rr^{1+n}_+)}}{\cm_\rr(\|F(\cdot,\cdot)\|_{L^p_x(\rn)})(t)}\r\},
$$
we see that
\begin{eqnarray*}
    |S_\az F(t,x)|\lesssim\|F\|_{L^q_tL^p_x(\rr^{1+n}_+)}
    \max\lf\{1,\,\lf[\ln\frac{et^{1/q}\cm_{\rr} (\|F\|_{L^p_x(\rr^n)})(t)}
{\|F\|_{L^q_tL^p_x(\rr^{1+n}_+)}}\r]^{\frac{q-1}{q}}\r\}.
\end{eqnarray*}
Letting $r_0=t_0^{\frac{1}{2\az}}$ yields a constant $C>0$ such that
\begin{eqnarray*}
   &&\iint_{B_{r_0}^{(\alpha)}{(t_0,x_0)}}\exp\lf(\frac{S_\az F(t,x)}
   {C\|F\|_{L^q_tL^p_x(\rr^{1+n}_+)}}\r)^{\frac{q}{q-1}}\,dx\,dt\\
   &&\hs\lesssim \iint_{B_{r_0}^{(\alpha)}{(t_0,x_0)}}\frac{et^{1/q}\cm_{\rr}
   (\|F\|_{L^p_x(\rr^n)})(t)}
{\|F\|_{L^q_tL^p_x(\rr^{1+n}_+)}}\,dx\,dt\\
&&\hs\lesssim r_0^nt_0^{1/q}\int_{0}^{2t_0}
\frac{\cm_{\rr} (\|F\|_{L^p_x(\rr^n)})(t)}
{\|F\|_{L^q_tL^p_x(\rr^{1+n}_+)}}\,dt\\
&&\hs\lesssim t_0^{1/q}r_0^{n+2\az-2\az/q}\\
&&\hs\approx |B_{r_0}^{(\alpha)}{(t_0,x_0)}|,
\end{eqnarray*}
which completes the proof of (ii).

(iii) Given a point $(t_0,x_0)\in \rr^{1+n}_+$, let $x\in\rn$ be sufficient
close to $x_0$ and $\dz=|x-x_0|$. Then
\begin{eqnarray*}
   &&|S_\az F(t_0,x_0)- S_\az F(t_0,x)|\\
&&\hs \le \int_0^{t_0}\int_\rn|K^{(\az)}_{t_0-s}(x_0-y)
   -K^{(\az)}_{t_0-s}(x-y)||F(y,s)|\,dy\,ds\\
   &&\hs \le \int_0^{t_0}\int_{B(x_0,3\dz)}\cdots\,dyds+\int_0^{t_0}\int_{\rn\setminus B(x_0,3\dz)}\cdots\,dyds\\
   &&\hs=:\mathrm{I}+\mathrm{II}.
\end{eqnarray*}
Note that
\begin{eqnarray*}
&&\int_0^{t_0}\int_{B(x_0,3\dz)}K^{(\az)}_{t_0-s}(x_0-y)||F(y,s)|\,dy\,ds\\
&&\hs\le \int_{0}^{t_0-(2\dz)^{2\az}}\int_{B(x_0,3\dz)}\Big(\frac{|t-s|}
{|t-s|^{1+\frac{n}{2\az}}}\Big)|F(y,s)|\,dy\,ds\\
&&\hs\hs+\int_{t_0-(2\dz)^{2\az}}^t\int_{B(x_0,3\dz)}\Big(\frac{|t-s|}
{(|t-s|^{\frac 1{2\az}}+|x_0-y|)^{n+\az}}\Big)|F(y,s)|\,dy\,ds\\
&&\hs \lesssim\int_{0}^{t_0-(2\dz)^{2\az}}\Big(\frac{\dz^{\frac {n(p-1)}{p}}}
{|t-s|^{\frac{n}{2\az}}}\Big)\|F(\cdot,s)\|_{L^p_x(\rn)}\,ds\\
&&\hs \hs+\ \int_{t_0-(2\dz)^{2\az}}^t\frac{\|F(\cdot,s)\|_{L^p_x(\rn)}}{|t-s|^{\frac{n}{2p\az}}}
\,ds\\
&&\hs \lesssim\|F\|_{L^q_tL^p_x(\rr^{1+n}_+)}\dz^{\frac {n(p-1)}{p}}\lf(\int_{0}^{t_0-(2\dz)^{2\az}} \frac{ds}
{|t-s|^{\frac{nq}{2\az(q-1)}}}\r)^{\frac{q-1}{q}}\\
&&\hs \hs+\ \|F\|_{L^q_tL^p_x(\rr^{1+n}_+)}
\lf(\int_{t_0-(2\dz)^{2\az}}^t\frac{ds}{|t-s|^{\frac{nq}{2p\az(q-1)}}}\r)^{\frac {q-1}{q}}\\
&&\hs\lesssim\|F\|_{L^q_tL^p_x(\rr^{1+n}_+)}\lf(\dz^{\frac {n(p-1)}{p}}\dz^{\frac{2\az(q-1)}{q}-n}
+\ \dz^{\frac{2\az(q-1)}{q}-\frac{n}{p}}\r)\\
&&\hs\lesssim\|F\|_{L^q_tL^p_x(\rr^{1+n}_+)}\dz^{\frac{2\az(q-1)}{q}-\frac{n}{p}}.
\end{eqnarray*}
Thus the first term $\mathrm{I}$ is bounded from above as
\begin{eqnarray*}
\mathrm{I} && \le \int_0^{t_0}\int_{B(x_0,3\dz)}K^{(\az)}_{t_0-s}(x_0-y)||F(y,s)|\,dy\,ds\\
&&\hs+ \int_0^{t_0}\int_{B(x,4\dz)}K^{(\az)}_{t_0-s}(x-y)||F(y,s)|\,dy\,ds\\
   &&\lesssim\|F\|_{L^q_tL^p_x(\rr^{1+n}_+)}|x-x_0|^{\frac{2\az(q-1)}{q}-\frac{n}{p}}.
\end{eqnarray*}
To estimate the second term $\mathrm{II}$, notice that
$$|\nabla K^{(\az)}_1(x)|\lesssim(1+|x|)^{-n-1};
$$
see also \cite[Remark 2.1]{MiaoYuanZhang}. Using this and the H\"older inequality, we have
\begin{eqnarray*}
\mathrm{II} &&
\le \int_0^{t_0}\int_{\rn\setminus B(x_0,3\dz)}|K^{(\az)}_{t_0-s}(x_0-y)
   -K^{(\az)}_{t_0-s}(x-y)||F(y,s)|\,dy\,ds\\
   &&\le \int_0^{t_0}\int_{\rn\setminus B(x_0,3\dz)}\Big(\frac{\dz}{|t-s|^{\frac{1}{2\az}}}\Big)
\Big(\frac{|t-s|^{\frac{1}{2\az}}}{(|t-s|^{\frac{1}{2\az}}+|x_0-y|)^{n+1}}\Big)|F(y,s)|\,dy\,ds\\
&&\lesssim\int_{0}^{t_0-(2\dz)^{2\az}}\int_{\rn\setminus B(x_0,3\dz)}\Big(\frac{\dz}
{(|t-s|^{\frac 1{2\az}}+|x_0-y|)^{n+1}}\Big)|F(y,s)|\,dy\,ds\\
&&\hs\hs+\int_{t_0-(2\dz)^{2\az}}^t\int_{\rn\setminus B(x_0,3\dz)}\Big(\frac{\dz}
{|x_0-y|^{n+1}}\Big)|F(y,s)|\,dy\,ds\\
&&\lesssim\int_{0}^{t_0-(2\dz)^{2\az}}\dz \|F(\cdot,s)\|_{L^p_x(\rn)}
|t-s|^{-\frac{n}{2\az p}-\frac{1}{2\az}}\,ds\\
&&\hs\hs+\ \int_{t_0-(2\dz)^{2\az}}^t\|F(\cdot,s)\|_{L^p_x(\rn)}\dz^{-\frac {n}{p}}\,ds\\
&&\lesssim \|F\|_{L^q_tL^p_x(\rr^{1+n}_+)}|x-x_0|^{\frac{2\az(q-1)}{q}-\frac{n}{p}}.
\end{eqnarray*}
Thus, we conclude that
\begin{eqnarray*}
   &&|S_\az F(t_0,x_0)- S_\az F(t_0,x)|\lesssim
   \|F\|_{L^q_tL^p_x(\rr^{1+n}_+)}|x-x_0|^{\frac{2\az(q-1)}{q}-\frac{n}{p}}.
\end{eqnarray*}

Let $(x,t_1), (x,t_2)\in  \rr^{1+n}_+$. Without loss of generality we may assume $t_1>t_2$, and then write
\begin{eqnarray*}
   |S_\az F(t_1,x)-S_\az F(t_2,x)|&&\le \int_0^{t_2}
   \lf|\big(e^{-(t_1-s)(-\Delta)^\az}-e^{-(t_2-s)(-\Delta)^\az}\big)F(x,s)\r|\,ds\\
   &&\hs+\int_{t_2}^{t_1}
   \lf|(e^{-(t_1-s)(-\Delta)^\az})F(x,s)\r|\,ds\\
   &&=:\mathrm{III}+\mathrm{IV}.
\end{eqnarray*}
By using the mapping property of the semigroup, we obtain
\begin{eqnarray*}
   \mathrm{III}&&\le \int_0^{t_2}\int_{t_2-s}^{t_1-s}|(-\Delta)^\az
   e^{-r(-\Delta)^\az}F(x,s)|\,dr\,ds\\
   &&\le \int_0^{t_2}\int_{t_2-s}^{t_1-s}r^{-1-\frac{n}{2\az p}}\|F(\cdot,s)\|_{L^p_x(\rn)}\,dr\,ds\\
   &&\le \int_0^{t_2}\int_0^{t_1-t_2} (t_2-s+r)^{-1-\frac{n}{2\az p}}
   \|F(\cdot,s)\|_{L^p_x(\rn)}\,ds\\
   &&\le \int_0^{t_1-t_2}\int_0^{t_2} (t_2-s+r)^{-1-\frac{n}{2\az p}}\|F(\cdot,s)\|_{L^p_x(\rn)}\,ds\\
   &&\lesssim\|F\|_{L^q_tL^p_x(\rr^{1+n}_+)}\int_0^{t_1-t_2} r^{\frac {q-1}{q}-1-\frac{n}{2\az p}}\,dr\\
   &&\lesssim|t_2-t_1|^{1-\frac{1}{q}-\frac{n}{2\az p}}\|F\|_{L^q_tL^p_x(\rr^{1+n}_+)},
\end{eqnarray*}
and
\begin{eqnarray*}
   \mathrm{IV}&&\le \int_{t_2}^{t_1}(t_1-s)^{-\frac{n}{2\az p}}\|F(s,\cdot)\|_{L^p_x(\rn)}\,ds\\
   &&\lesssim|t_2-t_1|^{1-\frac{1}{q}-\frac{n}{2\az p}}\|F\|_{L^q_tL^p_x(\rr^{1+n}_+)}.
\end{eqnarray*}
Hence
\begin{eqnarray*}
   |S_\az F(t_1,x)-S_\az F(t_2,x)|&&\lesssim|t_2-t_1|^{1-\frac{1}{q}-\frac{n}{2\az p}}
   \|F\|_{L^q_tL^p_x(\rr^{1+n}_+)}
\end{eqnarray*}
The difference estimates on $S_\az$ give us that if $(t,x)$ is close to
$(t_0,x_0)$ then
\begin{eqnarray*}
   &&|S_\az F(t,x)-S_\az F(t_0,x_0)|\\
   &&\ \ \le |S_\az F(t,x)-S_\az F(t_0,x)|+|S_\az F(t_0,x)-S_\az F(t_0,x_0)|\\
   &&\ \ \lesssim\lf(|t-t_0|^{1-\frac{1}{q}-\frac{n}{2\az p}}+|x-x_0|^{\frac{2\az(q-1)}{q}-\frac{n}{p}}\r)
   \|F\|_{L^q_tL^p_x(\rr^{1+n}_+)},
\end{eqnarray*}
which completes the proof of (iii).

\subsection{Proof of Theorem \ref{t12new}} (i) In the sequel, let $\beta=(p\wedge q)(\frac{n}{p}+\frac{2\alpha}{q}-2\alpha)$. Also, assume that $F\geq 0$ satisfies $S_\alpha F\geq 1_{B^{(\alpha)}_{r_0}(0,0)}.$
Then, according to the definition of operator $S_\alpha$, the following transform
$$
\begin{cases}
s=\frac{t}{r_0^{2\alpha}};\\
y=\frac{x}{r_0};\\
F_{r_0}(s,y)=F(r_0^{2\alpha}s,r_0y);\\
G(s,y)=r_0^{2\alpha}F_{r_0}(s,y),
\end{cases}
$$
enjoys the property $S_\alpha G\geq 1_{B_{1}^{(\alpha)}(0,0)}$. Thus,
$$
C_{p,q}^{(\alpha)}(B^{(\alpha)}_{1}(0,0))\leq\|r_0^{2\alpha}F_{r_0}\|_{L^{q}_{t}L^{p}_{x}(\mathbb{R}^{1+n}_{+})}^{p\wedge q}=r_0^{-\beta}\|F\|_{L^{q}_{t}L^{p}_{x}(\mathbb{R}^{1+n}_{+})}^{p\wedge
q}.
$$
This implies that
$$
C_{p,q}^{(\alpha)}(B_{1}^{(\alpha)}(0,0))\leq r_0^{-\beta}
C_{p,q}(B_{r_0}^{(\alpha)}(0,0)).
$$
In fact, the last inequality is an equality since changing the order of $B^{(\alpha)}_{1}(0,0)$ and $B^{(\alpha)}_{r_0}(0,0)$
derives
$$
C_{p,q}^{(\alpha)}(B^{(\alpha)}_{r_0}(0,0))\leq r_0^{\beta}
C_{p,q}^{(\alpha)}(B^{(\alpha)}_{1}(0,0)).
$$

Next, we consider the desired equivalence estimate. If $F\geq 0$ and $S_\alpha F\geq
1_{B^{(\alpha)}_{r_0}(t_{0},x_{0})}$, then, for $1\leq p<\infty$ and
$1<q<\infty,$ there exist $\tilde{p}$ and $\tilde{q}$ such that
$$
\begin{cases}
1\leq p <\tilde{p}<\infty;\\
1<q<\tilde{q}<\infty;\\
\left(\frac{1}{q}-\frac{1}{\tilde{q}}\right)
+\frac{n}{2\alpha}\left(\frac{1}{p}-\frac{1}{\tilde{p}}\right)=1.
\end{cases}
$$
Consequently, according to Theorem \ref{t11}(ii) we have
\begin{equation*}\label{ineq str}
\|S_\alpha F\|_{L^{\tilde{q}}_tL^{\tilde{p}}_{x}(\mathbb{R}^{1+n}_{+})}\lesssim
\|F\|_{L^{q}_{t}L^{p}_{x}(\mathbb{R}^{1+n}_{+})}.
\end{equation*}
This, along with the definition of $C^{(\alpha)}_{p,q}(\cdot)$, implies that
$$
r_0^{\beta}\lesssim C^{(\alpha)}_{p,q}\big(B^{(\alpha)}_{r_0}(t_{0},x_{0})\big)
$$
thanks to
$$
\frac{n}{\tilde{p}}+\frac{2\alpha}{\tilde{q}}=\frac{n}{p}+\frac{2\alpha}{q}-2\alpha.
$$

To get the corresponding upper bound of $C^{(\alpha)}_{p,q}\big(B^{(\alpha)}_{r_0}(t_{0},x_{0})\big)$, we consider
$$
B^{(\alpha)}_{r_0,\eta}(t_{0},x_{0}):=\{(t,x)\in\mathbb{R}^{1+n}_{+}:\
|t-t_{0}|<(\eta r_0)^{2\alpha}\ \&\ |x-x_0|<r_0\}
$$
for some sufficiently large $\eta>0$ which will be determined later. Note that $(t,x)\in B^{(\alpha)}_{r_0}(t_{0},x_{0})$ ensures
\begin{eqnarray*}
&&S_\alpha 1_{B^{(\alpha)}_{r_0,\eta}(t_{0},x_{0})}(t,x)\\
&&\hs=\int_{0}^{t}\int_{\mathbb{R}^{n}}K_{t-s}^{(\alpha)}(x-y)1_{B^{(\alpha)}_{r_0,\eta}(t_{0},x_{0})}(s,y)\,dyds\\
&&\hs=\int_{(0,t)\cap\{s:|s-t_{0}|<(\eta r_0)^{2\alpha}\}}\int_{|y-x_{0}|<r_0}K_{t-s}^{(\alpha)}(x-y)\,dyds\\
&&\hs\geq\int_{(0,t)\cap \{s:|s-t_{0}|<(\eta r_0)^{2\alpha}\}
\cap\{s:t-s>\frac{\eta^{2\alpha}-1}{2}r_0^{2\alpha}\}}\int_{|y-x_{0}|<r_0}K_{t-s}^{(\alpha)}(x-y)\,dyds
\end{eqnarray*}
for sufficiently small $r_0>0.$ According to \cite[Proposition 1]{M.
Nishio}, there are positive constants $\sigma$ and $\kappa$, depending only $n$ and $\alpha$, such that
$$
\inf\{K_{t}^{(\alpha)}(x): |x|\leq \sigma
t^{\frac{1}{2\alpha}}\}\geq \kappa t^{-\frac{n}{2\alpha}}.
$$
Under
$$
\begin{cases}
(t,x)\in B^{(\alpha)}_{r_0}(t_{0},x_{0});\\
|y-x_{0}|<r_0;\\
t-s>\Big(\frac{\eta^{2\alpha}-1}{2}\Big)r_0^{2\alpha},
\end{cases}
$$
one has
$$
|x-y|\leq
|x-x_{0}|+|y-x_{0}|<2r_0<2\left(\frac{2}{\eta^{2\alpha}-1}\right)^{\frac{1}{2\alpha}}
|t-s|^{\frac{1}{2\alpha}}<\sigma|t-s|^{\frac{1}{2\alpha}}
$$
for some large enough $\eta$ with
$$
2\Big(\frac{2}{\eta^{2\alpha}-1}\Big)^{\frac{1}{2\alpha}}<\sigma.
$$
Thus, one gets that if $|t-t_0|<r^{2\alpha}$ then
\begin{align*}
&S_\alpha 1_{B^{(\alpha)}_{r_0,\eta}(t_{0},x_{0})}(t,x)\\
&\quad\geq\int_{(0,t)\cap
\{s:|s-t_{0}|<(\eta
r_0)^{2\alpha}\}\cap\{s:t-s>{(\eta^{2\alpha}-1)}{2^{-1}}r_0^{2\alpha}\}}\int_{|y-x_{0}|<
r_0} |t-s|^{-\frac{n}{2\alpha}}\,dyds\\
&\quad\ge cr_0^{2\alpha}
\end{align*}
holds for some constant $c>0$ independent of $r_0$. Consequently,
$$
S_\alpha\Big(\frac{1_{B^{(\alpha)}_{r_0,\eta}(t_{0},x_{0})}}{cr_0^{2\alpha}}\Big)
(t,x)\geq 1,\quad\forall (t,x)\in B^{(\alpha)}_{r_0}(t_{0},x_{0}).
$$
This gives
$$
C_{p,q}^{(\alpha)}(B^{(\alpha)}_{r_0}(t_{0},x_{0}))\leq
\left\|\frac{1_{B^{(\alpha)}_{r_0,\eta}(t_{0},x_{0})}}{cr_0^{2\alpha}}
\right\|_{L^{q}_{t}L^{p}_{x}(\mathbb{R}^{1+n}_{+})}^{p\wedge q}\lesssim r_0^{\beta}.
$$

(ii) For an arbitrarily fixed point $(t_0,x_0)\in \rr^{1+n}_+$. Let $r_0<<\min\{t_0,1\}$.
Suppose that $S_\alpha F(t,x)\ge 1$ on $B_{r_0}^{(\alpha)}(t_0,x_0)$.
Then by Theorem \ref{t1new}(ii), we have a constant $C>0$ such that
\begin{eqnarray*}
&&\iint_{B_{r_0}^{(\alpha)}{(t_0,x_0)}}\exp\lf(\frac{S_\az F(t,x)}
   {C\|F\|_{L^q_tL^p_x(\rr^{1+n}_+)}}\r)^{\frac{q}{q-1}}\,dx\,dt\\
&&\hs\lesssim \iint_{B_{r_0}^{(\alpha)}{(t_0,x_0)}}\frac{et^{1/q}\cm_{\rr}
   (\|F\|_{L^p_x(\rr^n)})(t)}
{\|F\|_{L^q_tL^p_x(\rr^{1+n}_+)}}\,dx\,dt\\
&&\hs\lesssim r_0^{n+2\az}+r_0^nt_0^{1/q}\int_{t_0-r_0^{2\az}}^{t_0+r_0^{2\az}}
\frac{\cm_{\rr} (\|F\|_{L^p_x(\rr^n)})(t)}
{\|F\|_{L^q_tL^p_x(\rr^{1+n}_+)}}\,dt\\
&&\hs\lesssim t_0^{1/q}r_0^{n+2\az-2\az/q}.
\end{eqnarray*}

On the other hand, as $S_\alpha F(\cdot,\cdot)\ge 1$ on $B_{r_0}^{(\alpha)}(t_0,x_0)$,
it follows that for a constant $c>0$,
$$
   \iint_{B_{r_0}^{(\alpha)}{(t_0,x_0)}}\exp\lf(\frac{S_\az F(t,x)}
   {c[\ln \frac 1{r_0}]^{\frac{1-q}{q}}}\r)^{\frac{q}{q-1}}\,dx\,dt
   \gtrsim r_0^{n+2\az}\exp\lf( c^{-\frac{q}{q-1}}\ln \frac 1{r_0}\r)\gtrsim r_0^{n+2\az-c^{-\frac{q}{q-1}}},
$$
which implies that
$$\|F\|_{L^q_tL^p_x(\rr^{1+n}_+)}\gtrsim \Big(\ln \frac 1{r_0}\Big)^{\frac{1-q}{q}}$$
and hence
$$C_{p,q}^{(\alpha)}(B^{(\alpha)}_{r_0}(t_{0},x_{0}))\gtrsim\Big(\ln \frac 1{r_0}\Big)^{\frac{1-q}{q}(p\wedge q)}\quad\hbox{as}\quad
r_0\to 0.
$$

Next, we prove the converse form of the last inequality. Let
$$
E:=\big\{(t,x)\in \rr^{1+n}_+:\ (2r_0)^{2\az}<t_0-t<(2r_0)^\az\ \&\ |t-t_0|^{\frac 1{2\az}}<|x_0-x|<2\big\}.
$$
Define
$$F(x,t):=\lf\{
\begin{array}{cc}
\frac{1}{(|t_0-t|^{\frac{1}{2\az}}+|x-x_0|)^{2\az}}, & \ \ {\forall}
(t,x)\in E;\\
0, & \mbox{otherwise}.
\end{array}
\r.
$$
By using the known estimate below
$$K_t^{(\az)}(x-y)\approx \frac{t}{(t^{\frac 1{2\az}}+|x-y|)^{n+2\az}},$$
we see that for each $(t,x)\in B_{r_0}^{(\alpha)}(t_0,x_0)$,
\begin{eqnarray*}
S_\az F(t,x)&&=\int_0^t \int_\rn K^{(\az)}_{t-s}(x-y)F(y,s)\,dy\,ds\\
&&\approx\iint_E \frac{|t-s|}{(|t-s|^{\frac 1{2\az}}+|x-y|)^{n+2\az}}F(y,s)\,dy\,ds\\
&&\approx \iint_E \frac{|t_0-s|}{(|t_0-s|^{\frac 1{2\az}}+|x_0-y|)^{n+2\az}}F(y,s)\,dy\,ds\\
&&\gtrsim\int_{t_0-(2r_0)^{\az}}^{t_0-(2r_0)^{2\az}}
\int_{B(x_0,2)\setminus B(x_0,|t_0-s|^{\frac{1}{2\az}})}
\frac{|t_0-s|}{(|t_0-s|^{\frac 1{2\az}}+|x_0-y|)^{n+4\az}}\,dy\,ds\\
&&\gtrsim\!\!\int_{t_0-(2r_0)^{\az}}^{t_0-(2r_0)^{2\az}}
\int_{|t_0-s|^{\frac{1}{2\az}}}^2
\frac{|t_0-s|r^{n-1}}{(|t_0-s|^{\frac 1{2\az}}+r)^{n+4\az}}\,dr\,ds\\
&&\gtrsim\int_{t_0-(2r_0)^{\az}}^{t_0-(2r_0)^{2\az}}|t_0-s|\lf(|t_0-s|^{-2}-2^{-4\az}\r)\,ds\\
&&\gtrsim \int_{t_0-(2r_0)^{\az}}^{t_0-(2r_0)^{2\az}}|t_0-s|^{-1}\,ds\\
&&\gtrsim\ln \frac{1}{(2r_0)^{\az}}.
\end{eqnarray*}
Moreover, noticing that $2p\az>n$, we have
\begin{eqnarray*}
\|F\|_{L^q_tL^p_x(\rr^{1+n}_+)}^q\!\!&&\lesssim\int_{t_0-(2r_0)^{\az}}^{t_0-(2r_0)^{2\az}}
\lf(\int_{B(x_0,2)\setminus B(x_0,|t_0-s|^{\frac{1}{2\az}})}
\frac{1}{(|t_0-s|^{\frac 1{2\az}}+|x_0-y|)^{2p\az}}\,dy\r)^{q/p}\!\!\,ds\\
&&\lesssim\int_{t_0-(2r_0)^{\az}}^{t_0-(2r_0)^{2\az}}
\lf(\int_{|t_0-s|^{\frac{1}{2\az}}}^2
{r^{n-1-2p\az}}\,dr\r)^{q/p}\,ds\\
&&\lesssim\int_{t_0-(2r_0)^{\az}}^{t_0-(2r_0)^{2\az}}|t_0-s|^{\frac{(n-2p\az)q}{2p\az}}\,ds\\
&&\lesssim\ln \frac{1}{(2r_0)^{\az}}.
\end{eqnarray*}
The above two estimates give
$$
C_{p,q}^{(\alpha)}\big(B^{(\alpha)}_{r_0}(t_{0},x_{0})\big)\lesssim\lf\|\frac{F}{\ln \frac 1{r_0}}\r\|_{L^q_tL^p_x(\rr^{1+n}_+)}^{q\wedge p}
\lesssim\lf(\ln \frac 1{r_0}\r)^{\frac{1-q}{q}(p\wedge q)}\quad\hbox{as}\quad r_0\to 0.
$$

\subsection{Proof of Corollary \ref{c31}} (i) This follows from Theorem \ref{t11}(ii), Theorem \ref{t12new}(i) and Proposition \ref{p23}(iii).

(ii)-(iii) The comparison inequalities for the two capacities follow from Proposition \ref{p23}(iii)
and (i)-(ii) of Theorem \ref{t12new}. To get the dimension inequality, we firstly keep in mind the fact
$$
C_{p,q}^{(\alpha)}\big(\mathcal{B}[S_\alpha F;p,q]\big)=0\ \ \hbox{for}\ \ 0\le F\in L^q_tL^p_x(\mathbb R^{1+n}_+),\\
$$
and secondly recall Proposition \ref{p22} and the following Frostman type theorem (cf. \cite[Theorem 5.1.12]{AH}): if $\phi: [0,\infty)\mapsto [0,\infty]$ increases with $\phi(0)=0$ then for a given compact $K\subset\mathbb R^{1+n}_+$ there is a measure $\mu\in\mathcal{M}^+(K)$ obeying
$\mu\big(B_r^{(\alpha)}(t,x)\big)\lesssim \phi(r)$ such that
$\mu(K)\approx H^{\phi,\alpha}_\infty(K)$.

Now, let $K$ be any compact subset of the blow-up set $\mathcal{B}[S_\alpha F; p,q]$ and be contained in a ball $B^{(\alpha)}_R(t_0,x_0)$. Taking $0\le G\in L^{q}_tL_x^p(\mathbb R^{1+n}_+)$ such that $S_\alpha G\ge 1$ on $K$, we use the dyadic decomposition of a set and the H\"older inequality to get that if $0<R_0<1\wedge R$ then
\begin{align*}
\mu(K)&\le\iint_K S_\alpha G(t,x)\,d\mu(t,x)\\
&\le\iint_{\rr^{1+n}_+}G(s,y)\iint_{K\cap((s,\infty)\times\mathbb R^n)}K^{(\alpha)}_{t-s}(x-y)\,d\mu(t,x)\,dyds\\
&\lesssim\iint_{\rr^{1+n}_+}G(s,y)\iint_{K\cap((s,\infty)\times\mathbb R^n)}\Big(\frac{|t-s|}{(|t-s|^\frac1{2\alpha}+|x-y|)^{n+2\alpha}}\Big)\,d\mu(t,x)\,dyds\\
&\ls \mu(K)\iint_{\rr^{1+n}_+\setminus B_{2R_0}^{(\az)}(t_0,x_0)}G(s,y)
\Big(\frac{|t_0-s|}{(|t_0-s|^\frac1{2\alpha}+|x_0-y|)^{n+2\alpha}}\Big)\,dyds\\
&\hs+ \iint_{B_{2R_0}^{(\az)}(t_0,x_0)}G(s,y)\Big(\sum_{j=0}^\infty \frac{\mu\big(B^{(\alpha)}_{2^{-j}R_0}(s,y)\big)}{(2^{-j}R_0)^n}\Big)\,dyds\\
&\ls \mu(K)R_0^{2\az-\frac{2\az}{q}-\frac{n}{p}}\|G\|_{L^q_tL^p_x(\rr^{1+n}_+)}\\
&\hs+\iint_{B_{2R_0}^{(\az)}(t_0,x_0)}G(s,y)\int_0^{R_0} \frac{\mu\big(B^{(\alpha)}_r(s,y)\big)}{r^{1+n}}\,dr\,dyds\\
&\lesssim \mu(K)R_0^{2\az-\frac{2\az}{q}-\frac{n}{p}}\|G\|_{L^q_tL^p_x(\rr^{1+n}_+)}\\
&\hs +\int_0^{R_0}\iint_{B_{2R_0}^{(\az)}(t_0,x_0)} G(s,y){\mu\big(B^{(\alpha)}_r(s,y)\big)}\,dyds\,\frac{dr}{r^{1+n}}.
\end{align*}

For $p\le q$, we have
\begin{align*}
&\int_0^{R_0}\iint_{B_{2R_0}^{(\az)}(t_0,x_0)} G(s,y){\mu\big(B^{(\alpha)}_r(s,y)\big)}\,dyds\,\frac{dr}{r^{1+n}}\\
&\hs\le \|G\|_{L^p_tL^p_x(B_{2R_0}^{(\az)}(t_0,x_0))} \int_0^{R_0}\lf(\iint_{B_{2R_0}^{(\az)}(t_0,x_0)}
   {\mu\big(B^{(\alpha)}_r(s,y)\big)}^{\frac{p}{p-1}}\,dyds\r)^{\frac{p-1}{p}}\,\frac{dr}{r^{1+n}}\\
   &\hs \ls R_0^{2\az (\frac{1}{p}-\frac 1q)}\|G\|_{L^q_tL^p_x(\rr^{1+n}_+)} \int_0^{R_0}\lf(\iint_{B_{2R_0}^{(\az)}(t_0,x_0)}
   {\mu\big(B^{(\alpha)}_r(s,y)\big)}\,dyds\r)^{\frac{p-1}{p}}\,\frac{\phi(r)^{\frac{1}{p}}dr}{r^{1+n}}\\
   &\hs \ls R_0^{2\az (\frac{1}{p}-\frac 1q)}\|G\|_{L^q_tL^p_x(\rr^{1+n}_+)}\mu(K)^{\frac{p-1}{p}}
   \int_0^{R_0}\,\phi(r)^{\frac{1}{p}} r^{-1+2\az-\frac{n+2\az}{p}}\,dr.
\end{align*}
Meanwhile, for $p>q$, it holds that
\begin{align*}
&\int_0^{R_0}\iint_{B_{2R_0}^{(\az)}(t_0,x_0)} G(s,y){\mu\big(B^{(\alpha)}_r(s,y)\big)}\,dyds\,\frac{dr}{r^{1+n}}\\
&\hs\le \|G\|_{L^q_tL^q_x(B_{2R_0}^{(\az)}(t_0,x_0))} \int_0^{R_0}\lf(\iint_{B_{2R_0}^{(\az)}(t_0,x_0)}
   {\mu\big(B^{(\alpha)}_r(s,y)\big)}^{\frac{q}{q-1}}\,dyds\r)^{\frac{q-1}{q}}\,\frac{dr}{r^{1+n}}\\
   &\hs \ls R_0^{n(\frac{1}{q}-\frac 1p)}\|G\|_{L^q_tL^p_x(\rr^{1+n}_+)} \int_0^{R_0}\lf(\iint_{B_{2R_0}^{(\az)}(t_0,x_0)}
   {\mu\big(B^{(\alpha)}_r(s,y)\big)}\,dyds\r)^{\frac{q-1}{q}}\,\frac{\phi(r)^{\frac{1}{q}}dr}{r^{1+n}}\\
   &\hs \ls R_0^{n(\frac{1}{p}-\frac 1q)}\|G\|_{L^q_tL^p_x(\rr^{1+n}_+)}\mu(K)^{\frac{q-1}{q}}
   \int_0^{R_0}\,\phi(r)^{\frac{1}{q}} r^{-1+2\az-\frac{n+2\az}{q}}\,dr.
\end{align*}

The above estimates induce a constant $c_0:=C(R_0,p,q,\alpha)>0$, depending on $R_0$ and $p,q,\alpha$, such that
$$
\mu(K)\lesssim c_0\|G\|_{L_t^qL_x^p(\mathbb R^{1+n}_+)}\lf(\mu(K)+\mu(K)^{1-\frac{1}{p\wedge q}}
\int_0^{R_0}\phi(r)^{\frac1{p\wedge q}} r^{-1+2\az-\frac{n+2\az}{p\wedge q}}\,dr\r).
$$
Therefore, if
$$
\mathrm{III}:=\int_0^{R_0} \phi(r)^{\frac1{p\wedge q}} r^{-1+2\az-\frac{n+2\az}{p\wedge q}}\,dr<\infty,
$$
then by the fact $C_{p,q}^{(\alpha)}(K)=0$ it follows that $\mu(K)=0$, and hence $H^{\phi,\alpha}_\infty(K)\approx\mu(K)=0$. This in turn implies $H^{\phi,\alpha}(K)=0$ thanks to  $$H^{\phi,\alpha}_\infty(\cdot)=0\Longleftrightarrow H^{\phi,\alpha}(\cdot)=0.
$$
Consequently, $H^{\phi,\alpha}(\mathcal{B}[S_\alpha F;p,q])=0$.

The remaining is to consider two situations as follows.

{\it Case 1}: $n-2\alpha(p\wedge q-1)>0$. Under this condition, we choose
$$
\phi(r):=r^\eta,\ \forall r\in (0,\fz)\ \ \&\ \ \eta>n-2\alpha(p\wedge q-1)
$$
to obtain $\mathrm{III}<\infty$, thereby reaching
$$
\hbox{dim}_H^{(\alpha)}\big(\mathcal{B}[S_\alpha F;p,q]\big)\le n-2\alpha(p\wedge q-1).
$$

{\it Case 2}: $n-2\alpha(p\wedge q-1)=0$. Under this condition, we select
$$
\phi_\epsilon (r):=\Big(\ln_+\frac1r\Big)^{-\eta_\epsilon},\ \forall r\in(0,\fz)\ \ \&\ \
\eta_\epsilon=p\wedge q+\epsilon>p\wedge q
$$
to ensure $\mathrm{III}<\infty$ and thus
$H^{\phi_\epsilon,\alpha}(\mathcal{B}[S_\alpha F;p,q])=0.$


\begin{thebibliography}{10}
\bibitem{Ad}
D. R. Adams, \textit{Capacity and blow-up for the $3+1$ dimensional
wave opartor,}
   {Forum Math.} \textbf{20} (2008), 314-357.

\bibitem{AH}
   D. R. Adams, L.I. Hedberg,
\textit{Function Spaces and Potential Theory}, {A Series of Comprehensive
Studies in Mathematics, Springer, Berlin}, 1996.

\bibitem{AdXi}
D. R. Adams, J. Xiao, \textit{Strong type estimates for homogeneous Besov capacities,}
   {Math. Ann.} \textbf{325} (2003), 695-709.


\bibitem{Chen4}J. Chen, Q. Deng, Y. Ding, D. Fan, \textit{Estimates on fractional power dissipatve equations in function spaces,} {Nonlinear Anal.} \textbf{75}(2012), 2959-2974.

\bibitem{ChenSong} Z.-Q. Chen, R. Song, \textit{Estimates on Green functions and Poisson kernels for symmetric stable processes}, Math. Ann. \textbf{312}(1998), 465-501.

   \bibitem{CW}
   P. Constantin, J. Wu, \textit{Behavior of solutions of 2D
   quasi-geostrophic equations,} {SIAM J. Math. Anal.} \textbf{148}
   (1999),
    937-948

\bibitem{FJR} E. B. Fabes, B. F. Jones, N. M. Riviere, \textit{The initial value problem for the Navier-Stokes equations},  Arch. Rational Mech. Anal. \textbf{45} (1972), 222-240.

\bibitem{MiaoYuanZhang}
C. Miao, B. Yuan, B. Zhang,
   \textit{Well-posedness of the Cauchy problem for the fractional power dissipative equations,}
   {Nonlinear Anal.} \textbf{68} (2008),  461-484.

\bibitem{NSS}
   M. Nishio, K. Shimomura, N. Suzuki,
\textit{$\alpha$-parabolic Bergman spaces},  {Osaka J.
Math.} \textbf{42} (2005), 133-162.


\bibitem{NSY1}
   M. Nishio, N. Suzuki, M. Yamada,
\textit{Toeplitz operators and Carleson type measures on parabolic Bergman spaces},  {Hokkaido Math. J.} \textbf{36} (2007), 563-583.

\bibitem{NSY2}
   M. Nishio, N. Suzuki, M. Yamada,
   \textit{Compact Toeplitz operators on parabolic Bergman spaces}, {Hiroshima Math. J.} \textbf{38} (2008), 177-192.

\bibitem{NSY3}
   M. Nishio, N. Suzuki, M. Yamada,
\textit{Carleson inequalities on parabolic Bergman spaces},  {Tohoku Math. J.} \textbf{62} (2010), 269-286.

\bibitem{M. Nishio}
   M. Nishio, M. Yamada,
\textit{Carleson type measures on parabolic Bergman spaces},  {J.
Math. Soc. Japan} \textbf{58} (2006), 83-96.

\bibitem{Wu Yuan} G. Wu, J. Yuan, \textit{Well-posedness of the Cauchy problem for the
fractional power dissipative equation in critical Besov spaces,} {J.
Math. Anal. Appl.} \textbf{340} (2008),  1326-1335.

\bibitem{Wu2}
J. Wu, \textit{Lower bounds for an integral involving fractional
Laplacians and the generalized Navier-Stokes equations in Besov
spaces}, {Comm. Math. Phys.} \textbf{263} (2005), 803-831.

\bibitem{Xiao}
J. Xiao, \textit{ Homogeneous endpoint Besov space embeddings by
hausdorff
   capacity and heat equation},{ Adv. Math.} \textbf{207} (2006), 828-846.

\bibitem{XieZhang} L. Xie, X. Zhang, \textit{Heat kernel estimates for critical fractional diffusion operator}, arXiv1210.7063v1 [math.AP]26Oct2012.


\bibitem{Zhai}
Z. Zhai, \textit{Strichartz type estimates for fractional heat
equations,} {J. Math. Anal. Appl.} \textbf{356} (2009), 642-658.

\bibitem{Zhai1}
Z. Zhai, \textit{Carleson measure problems for parabolic Bergman spaces and homogeneous Sobolev spaces,} {Nonlinear Anal.} \textbf{73} (2010), 2611-2630.
\end{thebibliography}
\end{document}